\theoremstyle{plain}
\newtheorem{lem}{Lemma}[section]
\newtheorem{cor}[lem]{Corollary}
\newtheorem{prop}[lem]{Proposition}
\newtheorem{thm}[lem]{Theorem}
\theoremstyle{definition}
\newtheorem{ex}[lem]{Example}
\newtheorem{rem}[lem]{Remark}
\newtheorem{dfn}[lem]{Definition}
\newcommand{\bbZ}{\mathbb{Z}}
\newcommand{\calP}{\mathcal{P}}
\newcommand{\calL}{\mathcal{L}}
\newcommand{\al}{\alpha}
\newcommand{\de}{\delta}
\newcommand{\la}{\lambda}
\newcommand{\La}{\Lambda}
\newcommand{\im}{\operatorname{im}}            
\DeclareMathOperator{\pt}{pt}   
\newcommand{\bfD}{\mathbf{D}}
\newcommand{\hX}{\hat X}
\newcommand{\llbr}{\llbracket}
\newcommand{\rrbr}{\rrbracket}
\newcommand{\bbP}{\mathbb{P}}
\newcommand{\bfc}{\mathbf{c}}
\DeclareMathOperator{\rev}{rev}
\newcommand{\bj}{\mathbf{j}}
\title{On equivariant oriented cohomology of Bott-Samelson varieties}
\author{Hao Li}
\address{State University of New York Albany, ES 110, 1400 Washington Ave, Albany, NY 12222}
\email{hli29@albany.edu}
\author{Changlong Zhong}
\email{czhong@albany.edu}
\begin{document}
\maketitle
\begin{abstract}
For any Bott-Samelson resolution  $q_{I}:\hX_{I}\rightarrow G/B$ of the flag variety $G/B$, and  any  torus equivariant oriented cohomology $h_T$, we compute  the restriction formula of certain basis $\eta_L$ of $h_T(\hX_{I})$ determined by the projective bundle formula. As an application, we show that $h_T(\hX_{I})$ embeds into the equivariant oriented cohomology of $T$-fixed points, and the image can be characterized by using the Goresky-Kottwitz-MacPherson (GKM) description.  Furthermore, we compute   the push-forward  of the basis $\eta_L$ onto $h_T(G/B)$, and their restriction formula. 
\end{abstract}
\section{Introduction}
Let $G/B$ be a flag variety. For each $w$ in the Weyl group $W$, and a reduced decomposition $w=s_{i_1}\cdots s_{i_l}$, one defines the variety 
\[
\hX_{I_w}=P_{i_1}\times ^BP_{i_2}\times^B\cdots \times^B P_{i_l}/B.
\]
Here $P_{i_j}$ is the minimal parabolic subgroup corresponding to the simple root $\al_{i_j}$. Multiplication of all the coordinates defines a canonical map $q_{I_w}:\hX_{I_w}\to G/B$, which is proper and birational over the Schubert variety $X(w)$ of $w$.   This is called a Bott-Samelson resolution of $X(w)$. These resolutions play  important role in Schubert calculus and representation theory. 

We are interested in $h_T(\hX_{I_w})$, where $h_T$ is an (equivariant) oriented cohomology theory in the sense of Levine-Morel. Examples of $h$ include  the Chow group (singular cohomology) and K-theory. For any $h$, it is proved in \cite{CZZ1, CZZ2, CZZ3} that, after fixing a reduced decomposition $I_w$ for each $w\in W$, the push-forward $q_{I_w*}(1)$ in $h_T(G/B)$ of the fundamental class define a basis of $h_T(G/B)$ over the base ring $h_T(\pt)$. This enables the authors of loc.it. in constructing the algebraic replacement of $h_T(G/B)$, and provides standard setting for generalized Schubert calculus. For further study on equivariant oriented cohomology of $T$-varieties following this method, please refer to \cite{DZ20, GZ20, LZZ16, CNZ19, Z20}.

Let us  consider $h_T(\hX_{I})$ for a general sequence $I=(i_1,...,i_l)$. Note that the set $\hX_I^T$ of $T$-fixed points of $\hX_{I}$ is in bijection to the power set of $[l]=\{1,2,...,l\}$. Denote by $\bj:\hX_{I}^T\to \hX_{I}$ the canonical embedding. Our main result is the following:
\begin{thm} \label{thm:main}(Corollary \ref{cor:inj}) For any sequence $I$, the pull-back to $T$-fixed points $\bj^{*}:h_T(\hX_I)\to h_T(\hX_I^T)$ is injective.
\end{thm}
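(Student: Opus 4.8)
The plan is to induct on the length $l$ of the sequence $I=(i_1,\dots,i_l)$, exploiting the Bott--Samelson tower structure. Writing $I'=(i_1,\dots,i_{l-1})$, the forgetful projection $\pi\colon\hX_I\to\hX_{I'}$ dropping the last coordinate is a $T$-equivariant $\mathbb{P}^1$-bundle, since $P_{i_l}/B\cong\mathbb{P}^1$. The fixed-point set decomposes accordingly as $\hX_I^T=\hX_{I'}^T\sqcup\hX_{I'}^T$: over each fixed point of $\hX_{I'}$ the fibre $\mathbb{P}^1$ contains exactly the two fixed points indexed by whether $l\in L$ or $l\notin L$, matching the bijection between $\hX_I^T$ and the power set of $[l]$. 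I encode the two copies by the $T$-equivariant sections $\iota_0,\iota_1\colon\hX_{I'}\to\hX_I$ of $\pi$, so that $\bj$ on $\hX_I^T$ factors as $\iota_0\circ\bj$ and $\iota_1\circ\bj$ on the respective copies of $\hX_{I'}^T$.

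For the inductive step I use the projective bundle formula: $h_T(\hX_I)$ is free of rank two over $h_T(\hX_{I'})$ with basis $\{1,\zeta\}$ for a relative hyperplane class $\zeta$ (this is the local model underlying the basis $\eta_L$). Any class is $\xi=\pi^*a+\pi^*b\cdot\zeta$ with $a,b\in h_T(\hX_{I'})$, and $\iota_0^*\xi=a+b\,\iota_0^*\zeta$, $\iota_1^*\xi=a+b\,\iota_1^*\zeta$. If $\bj^*\xi=0$, then the two section components vanish, so by the inductive hypothesis (injectivity of $\bj^*$ for $\hX_{I'}$) we get $\iota_0^*\xi=\iota_1^*\xi=0$ in $h_T(\hX_{I'})$. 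Subtracting gives $b\cdot(\iota_0^*\zeta-\iota_1^*\zeta)=0$; once the fibre class difference $\iota_0^*\zeta-\iota_1^*\zeta$ is known to be a non-zero-divisor we obtain $b=0$, hence $a=0$ and $\xi=0$. The base case $l=0$ is trivial.

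The crux, and the step I expect to be the main obstacle, is proving that $\delta:=\iota_0^*\zeta-\iota_1^*\zeta\in h_T(\hX_{I'})$ is a non-zero-divisor. Geometrically $\delta$ is the formal-group-law Euler class of a line bundle whose $T$-weight at every fixed point $q\in\hX_{I'}^T$ is the nonzero root $w_q(\alpha_{i_l})$, where $w_q$ records the fibre over $q$. Since each such root class $x(w_q\alpha_{i_l})$ is a non-zero-divisor in $h_T(\pt)$ under the formal group law, and since $h_T(\hX_{I'})$ is a free $h_T(\pt)$-module whose lowest-degree part of $\delta$ is exactly this regular root class, $\delta$ is a non-zero-divisor. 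Reorganized globally, this is precisely the statement that the explicit restriction formula for $\eta_L$ makes the matrix $\big(\eta_L\vert_M\big)_{L,M\subseteq[l]}$ of $\bj^*$ triangular for the inclusion order on subsets, with diagonal entries equal to products of such root classes; being products of non-zero-divisors, the diagonal entries are non-zero-divisors, so the triangular matrix is injective. The technical content is thus entirely concentrated in the regularity of root classes together with the triangularity supplied by the restriction formula.
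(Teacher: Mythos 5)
Your inductive skeleton (climb the $\bbP^1$-bundle tower and reduce everything to regularity of root classes) is sound in spirit, but the proof as written has a genuine gap: the second $T$-equivariant section $\iota_1$ does not exist in general. The naive assignment $[g_1,\dots,g_{l-1}]\mapsto[g_1,\dots,g_{l-1},s_{i_l}]$ is not well defined: changing the representative of $[g_1,\dots,g_{l-1}]$ by $(b_1,\dots,b_{l-1})$ changes the proposed image to $[g_1,\dots,g_{l-1},b_{l-1}s_{i_l}]$, and $b_{l-1}s_{i_l}B\neq s_{i_l}B$ for general $b_{l-1}\in B$. Worse, no equivariant section through the ``upper'' copy of fixed points can exist in general: take $G=SL_2$ and $I=(1,1)$, so that $\hX_I\cong\bbP^1\times\bbP^1$ via $[g_1,g_2]\mapsto(g_1B,g_1g_2B)$, with $T$ acting diagonally and $p$ being the first projection. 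The upper fixed points are $\pt_{\{2\}}=(B,s_1B)$ and $\pt_{\{1,2\}}=(s_1B,B)$, and the only $T$-invariant curves through $(B,s_1B)$ are $\{B\}\times\bbP^1$ and $\bbP^1\times\{s_1B\}$, neither of which contains $(s_1B,B)$; so there is no $T$-invariant curve, let alone a $T$-equivariant section, through the two upper fixed points. (Each stage of a Bott--Samelson tower has exactly one canonical section, namely $\sigma_l$; that is precisely what makes these bundles nontrivial.) A second, independent problem is your regularity argument for $\delta=\iota_0^*\zeta-\iota_1^*\zeta$: the appeal to the ``lowest-degree part'' presupposes a grading on $h_T$, which a general oriented theory (e.g.\ $K$-theory) does not have.

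Both problems disappear if you replace the phantom section by the class $\zeta=\eta_l=\sigma_{l*}(1)$ itself and argue at fixed points, which is exactly the paper's mechanism. Since $\pt_L\notin\sigma_l(\hX_{I'})$ when $l\in L$, the restriction of $\eta_l$ to every upper fixed point is $0$, while its restriction to a lower fixed point $\pt_L$, $L\subset[l-1]$, is $v^L(x_{-\al_{i_l}})$ (Lemma \ref{lem:char} combined with Lemma \ref{lem:BSchar}). So write $\xi=p^*a+p^*b\cdot\eta_l$ with $a,b\in h_T(\hX_{I'})$ and suppose $\bj^*\xi=0$: the upper components say that all fixed-point restrictions of $a$ vanish, hence $a=0$ by induction; the lower components then say $(\bj^{I'}_L)^*(b)\cdot x_{-v^L(\al_{i_l})}=0$ for all $L\subset[l-1]$, and since each $x_{-v^L(\al_{i_l})}$ is regular in $S$ (the paper's standing regularity hypothesis), induction gives $b=0$. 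This repaired induction is a legitimate alternative route, marginally more economical than the paper's, which first establishes the full restriction formula (Theorem \ref{thm:res}) and then reads off injectivity from the skew-triangularity of the matrix $(a_{L,M})$ together with regularity of the skew-diagonal entries $a_{L,L^c}$ (Corollary \ref{cor:inj}); that triangularity argument is the reorganization you allude to in your final paragraph, but in your write-up it rests on the nonexistent $\iota_1$, so it does not stand as given.
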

Furthermore, we show that elements in the the image of $\bj^*$ satisfy the Goresky-Kottwitz-MacPherson (GKM) description (see Theorem \ref{thm:image}). Indeed, in case the sequence $I=(i_1,...,i_l)$ consists of distinct $i_j$'s, we prove that  the GKM description uniquely characterizes the image (Theorem \ref{thm:GKM}).

Let us mention the idea of the proof briefly. Since $\hX_{I}$ is constructed as a tower of $\bbP^1$-bundles, there is a canonically defined algebra generators $\eta_j\in h_T(\hX_{I})$ corresponding to each parabolic subgroup $P_{i_j}$ in $\hX_{I}$. Each $\eta_j$ satisfies certain quadratic relation. Therefore, for each subset  $L$ of  $[l]$,  denoting by $\eta_L$  the product of $\eta_j$ with $j$ in  $L$, then $\eta_L$ form a basis of $h_T(\hX_{I})$.  

We compute the restriction  $\bj^*(\eta_L)$ explicitly (Theorem \ref{thm:res}). The computation uses the characteristic map $\bfc:h_T(\pt)\to h_T(\hX_I)$ induced by the map sending a character $\la$ of $T$ to the first Chern class of the associated line bundle over $\hX_I$.  We then use the explicit formula of $\bj^*(\eta_L)$ to prove Theorem \ref{thm:main}, and the relationship of the image with the GKM description. 

As another application of the computation of $\bj^*(\eta_L)$, we also compute the push-forward of $\eta_L$ via the canonical map $q_I:\hX_I\to G/B$. We show that the push-forward $q_{I*}(\eta_L)$ coincides with the Bott-Samelson class corresponding to the sequence $I\backslash L$. 

For future applications, one would apply the restriction formula (Theorem \ref{thm:res}) and the push-forward formula (Theorem \ref{thm:pushf}) in the study of motivic Chern (mC) classes in K-theory. MC classes are certain K-theory classes associated to constructible subsets of $T$-varieties. For details, please refer \cite{AMSS17,RTV15, RTV17}. They are closely related with the K-theoretic stable basis of Springer resolutions, defined by Maulik-Okounkov \cite{MO12, O15} and studied in \cite{SZZ17, SZZ19}. Indeed, Mihalcea  has some recent work on the relationship between push-forward of MC classes of Bott-Samelson varieties and the Kazhdan-Lusztig basis of Hecke algebra. The authors hope to apply the computation of this paper on understanding this relationship.

The paper is organized as follows: In Section \ref{sec:2} we recall necessary notions of equivariant oriented cohomology theory, formal group algebra, and the characteristic map $\bfc$. In Section \ref{sec:3} we recall some basic facts about Bott-Samelson varieties. In Section \ref{sec:4} we compute the restriction formula (Theorem \ref{thm:res}), which was used to prove the injectivity of the pull-back map $\bj^*$ and the GKM description (Theorem \ref{thm:image}). In Section \ref{sec:5} we compute the push-forward of the basis $\eta_L$ onto $h_T(G/B)$. 





{\it Acknowledgments:}
The second author would like to thank Leonardo Mihalcea and Rebecca Goldin for helpful conversations. 

\section{Equivariant oriented cohomology theory} \label{sec:2}
In this section, we define some notations, and collect some basic notions and facts about equivariant oriented cohomology theory. 

Let $G$ be a split semisimple linear algebraic group over a field $k$, with rank $n$. Let $T$ be a split maximal torus of $G$ and  $B\subset  G$ be a Borel subgroup. Let $\Pi=\left\{\alpha_{1}, \alpha_{2},\ldots, \alpha_{n}\right\} $ be the set of simple roots, and $\Sigma$ be the set of roots. Let $P_{i}$ be the minimal parabolic subgroup corresponding to the simple root $\alpha_{i}$. The Weyl group $W$ of  $G$ is generated by $\left\{s_{\alpha_{1}},\ldots, s_{\alpha_{n}}\right\}$ where $s_{\alpha_{i}}$ is the reflection corresponding to $\alpha_{i}$. Note that $W$ can be identified with $N_G(T)/T$. Sometimes we will understand $s_i=s_{\alpha_{i}}$ as an element in $G$. We denote the group of characters of $T$ by $\Lambda$. For each positive integer $l$, denote $[l]=\{1,2,...,l\}$. 
 
 Let $F$ be a formal group law over the commutative ring $R$. Examples include the additive formal group law $F_a=x+y$ over $\bbZ$, and the multiplicative formal group law $F=x+y-\beta xy$ over $\bbZ[\beta, \beta^{-1}]$.

\begin{dfn}
Let $R\llbr x_\La\rrbr:=R\llbr x_{\lambda}|\la\in \La\rrbr$ be the power series ring.  Let $J_{F}$ be the closure of the ideal generated by $x_{0}$ and $x_{\lambda+\mu}-F(x_{\lambda}, x_{\mu}), ~\la, \mu\in \La $. We define the formal group algebra $R\llbracket \Lambda \rrbracket_{F}$ to be the quotient 
$$R\llbracket \Lambda \rrbracket_{F}=R\llbracket x_{\Lambda} \rrbracket/J_{F}.$$
\end{dfn}
Indeed, $R\llbr \La \rrbr_F$ is non-canonically isomorphic to the formal power series ring with $n$ variables. For simplicity, we denote $S=R\llbr \La \rrbr_F$. Note that by definition, $x_{-\la}$ is the formal inverse of $x_\la$, that is, $F(x_\la, x_{-\la})=0$. Since any formal group law $F$ is always of the form
\[
F(x,y)=x+y+a_{11}xy+\text{ higher order terms}, \quad a_{11}\in R,
\]
so it is not difficult to see that $x_{-\la}=-x_\la+x_\la^2 f(x_\la)$ for some $f(t)\in R[[t]]$. Therefore, $\frac{x_\la}{x_{-\la}}$ is an invertible element in $S$. 
\begin{ex}
\begin{enumerate}
\item 
Let $F_{a}$ be additive formal group law, then we have a ring isomorphism $$R\llbracket \Lambda \rrbracket_{F_{a}}\cong S_{R}(\Lambda)^{\wedge}, \quad x_{\lambda}\mapsto \lambda, $$ where $S_R(\Lambda)$ is the symmetric algebra of $\Lambda$ and the completion is done at the augmentation ideal.
\item 
Let $R[\Lambda]$  be the group algebra $\left\{\sum_{j}a_{j}e^{\lambda_{j}}|a_{j}\in R, \lambda_{j}\in \Lambda\right\}$. Then we have isomorphism $$R\llbracket \Lambda \rrbracket_{F_{m}}\cong R[\Lambda]^{\wedge}, \quad x_\la\mapsto  \beta^{-1}(1-e^{\lambda}),$$
where the completion $^\wedge$ is done at the augmentation ideal. 
\end{enumerate}
\end{ex}

Throughout this paper, suppose the root datum of $G$ together with the formal group law $F$ satisfy the regularity condition of \cite[Definition 4.4]{CZZ1}. For example, this is satisfied if $2$ is regular in $R$. Please consult loc.it. for more detail.  In particular, $x_\al$ is regular in $S$, for any root $\al$. 
The Weyl group action on $\Lambda$ induces an action of $W$ on $R\llbracket \Lambda \rrbracket_{F}$  by $s_{\alpha}(x_{\lambda})=x_{s_{\alpha}(\lambda)}.$ In particular, we have

\begin{lem}\cite[Corollary 3.4]{CPZ13} \label{lem:div}
 For any $v,w\in W$, any root $\al$ and $p\in S$,  we have 
\[
\frac{vs_{\al}w(p)-vw(p)}{x_{v(\al)}}\in S.
\]
\end{lem}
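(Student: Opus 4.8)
The plan is to reduce the statement to the single-reflection case and then prove a divisibility assertion on the topological generators of $S$, propagating it by a twisted Leibniz rule together with a continuity argument.

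First I would eliminate $v$ and $w$. Set $q=w(p)\in S$. Since $W$ acts on $S$ by ring automorphisms and $x_{v(\al)}=v(x_\al)$, the quantity in question equals
\[
\frac{v\bigl(s_\al(q)\bigr)-v(q)}{v(x_\al)}=v\!\left(\frac{s_\al(q)-q}{x_\al}\right),
\]
the last equality being justified because if $s_\al(q)-q=x_\al r$ with $r\in S$, then applying the automorphism $v$ gives $v(s_\al(q))-v(q)=v(x_\al)\,v(r)$. Hence it suffices to prove that $x_\al$ divides $s_\al(q)-q$ in $S$ for every $q\in S$; equivalently, writing $\Delta:=\tfrac{1}{x_\al}(s_\al-\id)$, a well-defined additive operator $S\to S[x_\al^{-1}]$ since $x_\al$ is regular, I must show $\Delta(S)\subseteq S$.

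Next I would check this on the generators $x_\la$. By definition $s_\al(x_\la)=x_{s_\al(\la)}=x_{\la-\langle\la,\al^\vee\rangle\al}$, so it is enough to see that $x_{\mu\pm\al}-x_\mu$ is divisible by $x_\al$ for every $\mu\in\La$ and then telescope over the unit steps from $\la$ to $s_\al(\la)$. Using the shape of the formal group law, $F(x,y)=x+y+\sum_{i,j\ge1}a_{ij}x^iy^j$, one gets $x_{\mu+\al}-x_\mu=F(x_\mu,x_\al)-x_\mu=x_\al\bigl(1+\sum_{i,j\ge1}a_{ij}x_\mu^i x_\al^{\,j-1}\bigr)\in x_\al S$; and since $x_{-\al}\equiv0\pmod{x_\al}$ (recall $x_{-\al}=-x_\al+x_\al^2 f(x_\al)$), the same holds with $\al$ replaced by $-\al$. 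Telescoping yields $\Delta(x_\la)\in S$ for every $\la$.

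Finally I would pass from generators to all of $S$. The operator $\Delta$ satisfies the twisted Leibniz rule
\[
\Delta(pq)=\Delta(p)\,q+s_\al(p)\,\Delta(q),
\]
which follows by adding and subtracting $s_\al(p)q$ inside $s_\al(p)s_\al(q)-pq$ and dividing by $x_\al$. Combined with the generator computation, this shows that $\Delta$ maps every polynomial in the $x_\la$ into $S$. The main obstacle is the passage to the completion: I must argue that $\Delta$ is continuous, equivalently that the ideal $x_\al S$ is closed, which rests on the regularity of $x_\al$ and the completeness of $S\cong R\llbr t_1,\dots,t_n\rrbr$, so that the divisibility, established on the dense subalgebra of polynomials, persists for arbitrary power series. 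Granting this, $\Delta(q)\in S$ for all $q\in S$, and applying $v$ as above completes the proof.
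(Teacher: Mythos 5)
Your first step---eliminating $v$ and $w$ by setting $q=w(p)$ and using that $v$ acts as a ring automorphism with $v(x_\al)=x_{v(\al)}$---is exactly the paper's own proof; the paper then simply cites \cite[Corollary 3.4]{CPZ13} for the core fact that $x_\al$ divides $s_\al(q)-q$ for every $q\in S$, which is what you attempt to reprove. Your computation on the generators $x_\la$ and your twisted Leibniz rule are both correct, and they do establish the divisibility on the (non-complete) $R$-subalgebra generated by the $x_\la$. The genuine gap is the final step: you assert that $x_\al S$ is closed in the adic topology because ``$x_\al$ is regular and $S$ is complete.'' That implication is false in general: regularity of an element plus completeness of the ambient power series ring do not make the principal ideal closed. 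For instance, take $R_0=k[y,\epsilon_1,\epsilon_2,\ldots]/(\epsilon_i\epsilon_j,\ y^m\epsilon_m)$ and $f=t-y$ in $S_0=R_0\llbr t\rrbr$: one checks $f$ is regular (from $fg=0$ one gets $g_j\in\bigcap_k y^kR_0=0$), yet with $u_m=\epsilon_m(t^{m-1}+yt^{m-2}+\cdots+y^{m-1})$ one has $fu_m=\epsilon_m(t^m-y^m)=\epsilon_m t^m$, so $g=\sum_{m\ge1}\epsilon_m t^m$ lies in $fS_0+t^kS_0$ for every $k$, while $g\notin fS_0$ (writing $g=fh$ forces $h_0\equiv\sum_{m=1}^{k}y^{m-1}\epsilon_m \pmod{y^kR_0}$ for all $k$, impossible for a fixed $h_0\in R_0$). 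Since the coefficient ring $R$ here is an arbitrary commutative ring (for algebraic cobordism it is the non-Noetherian Lazard ring), you cannot appeal to Noetherian-type facts; closedness of $x_\al S$ would itself require a proof (it can be obtained under the paper's regularity hypothesis by a case analysis: a root is at most $2$-divisible in $\La$, so either $x_\al$ is a topological coordinate, or $x_\al=x_{\omega}\cdot(2+\cdots)$ with $2$ regular in $R$), and your proposal supplies none.

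The standard way to close this hole---and it is essentially the content of the cited \cite[Corollary 3.4]{CPZ13}---avoids topology entirely. Choose a basis $\omega_1,\ldots,\omega_n$ of $\La$ and write $q=g(x_{\omega_1},\ldots,x_{\omega_n})$ for a power series $g$ over $R$. In $R\llbr y_1,\ldots,y_n,z_1,\ldots,z_n\rrbr$ one has the formal Taylor identity $g(y_1,\ldots,y_n)-g(z_1,\ldots,z_n)\in(y_1-z_1,\ldots,y_n-z_n)$ (substitute $u_i=y_i-z_i$ and note that the kernel of $u_i\mapsto 0$ is exactly $(u_1,\ldots,u_n)$). Specializing $y_i\mapsto x_{s_\al(\omega_i)}$ and $z_i\mapsto x_{\omega_i}$, and invoking your generator computation $x_{s_\al(\omega_i)}-x_{\omega_i}\in x_\al S$, yields $s_\al(q)-q\in x_\al S$ for all $q\in S$ in one stroke, with no density, continuity, or Leibniz argument. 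With that replacement your proof becomes complete, and it is then a self-contained version of the argument the paper disposes of by citation.
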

\begin{proof}According to \cite[Corollary 3.4]{CPZ13}, we know that $s_{\al}w(p)-w(p)$ is uniquely divisable by $x_{\al}.$ In other word, \[\frac{s_{\al}w(p)-w(p)}{x_{\al}}\in S.\] Then \[v(\frac{s_{\al}w(p)-w(p)}{x_{\al}})=\frac{vs_{\al}w(p)-vw(p)}{x_{v(\al)}}\in S.\]
\end{proof}
In particular, taking $w=v=e$, we see that $x_{\al}|(p-s_\al(p))$. We can then define the Demazure operator $\Delta_\al: S\to S$ by 
\begin{equation}\label{eq:Delta}
\Delta_\al(p)=\frac{p-s_\al(p)}{x_\al}.
\end{equation}

\begin{rem}
By direct calculation, we have the following formulas:  for $p,q \in S$, 
 \begin{align} \label{eq:m} s_{\alpha}\Delta_{\alpha}(p)=-\Delta_{-\al}(p) \end{align}
\begin{align} \label{eq:n} \Delta_{\al}(pq)=\Delta_{\alpha}(p)q+p\Delta_{\alpha}(q)-\Delta_{\alpha}(p)\Delta_{\alpha}(q)x_{\al}. \end{align}
\end{rem}

Let $h_{T}$ be an equivariant oriented cohomology theory of Levine-Morel. Roughly speaking, it is an additive contravariant functor $h_{T}$ from the category of smooth projective  $T$-varieties to the category of  commutative rings with units,  satisfying the following axioms: existence of push-forwards for projective morphisms, homotopy invariance and the projective bundle axioms \cite[\S2]{CZZ3}.  The Chern classes are defined. Moreover, there exists a formal group law $F$  over $R=h_{T}(\pt)$ such that if $\mathcal{L}_{1}$ and $\mathcal{L}_{2}$ are locally free sheaves of rank one, then $$c_{1}(\mathcal{L}_{1}\otimes\mathcal{L}_{2})=F(c_{1}(\mathcal{L}_{1}), c_{1}(\mathcal{L}_{2})). $$

It is proved in \cite[Theorem 3.3]{CZZ3} that 
\[
S=R\llbr\La\rrbr_F\cong h_T(\pt), \quad x_\la\mapsto c_1(\calL_\la),
\]
where $\calL_\la$ is the associated line bundle. As an immediate consequence, we see that if the variety $X$ is a finite set of points (with trivial $T$-action), then 
\[
h_T(X)=F(X;S),
\]
where the latter is the set of all maps from $X$ to $S$. It has a $S$-basis $f_x, x\in X$, and is a ring with product defined by 
\[
f_x\cdot f_y=\de_{x,y}f_x,\quad \text{ and  unit} \sum_{x\in X}f_x. 
\]

By functoriality, if $p:X\to Y$ is a $T$-equivariant map between two finite discrete sets of points on which $T$ acts trivially, then
\begin{align}\label{eq:finite}
p^*(f_y)=\sum_{x\in f^{-1}(y)}f_x, \quad p_*(f_x)=f_{p(x)}. 
\end{align}

We recall the definition of the characteristic map. 
Let $X$ be a $T$-variety on which $B$ acts on the right, and the $T$ and $B$ actions are commutative. Moreover, suppose  the quotient $X/B$ exists and $X\to X/B$ is a $T$-equivariant principal bundle. Following \cite[\S10.2]{CPZ13}, we can define a ring homomorphism
\[\bfc:S=h_T(\pt)\to h_T(X/B), \quad x_\la\mapsto c_1(\calL_\la). \]
It is called the characteristic map. 

Let $\al$ be a simple root with corresponding minimal parabolic subgroup $P_\al$.  Consider the fiber product $X'=X\times^B P_{\al}$, then $X'$ is a $T$-equivariant principal $P_\al$-bundle over $X/B$. Denote $p:X'/B\to X/B$, and there is a zero section 
\[\sigma: X/B\to X'/B, x\mapsto (x,1).\]  Similar as \cite[\S10.5]{CPZ13}, we have 
\begin{align}
h_T(X'/B)\cong h_T(X/B)[\xi]/(\xi^2-y\xi), \quad \xi=\sigma_*(1), \quad y=p^*\sigma^*\xi.
\end{align}

The following properties can be proved similarly as the non-equivariant version in \cite[\S10]{CPZ13}. 
\begin{lem}\label{chara} Denote $\bfc:S\to h_T(X/B)$ and $\bfc':S\to h_T(X'/B)$. For each $\la\in \La$, denote the associated line bundles on $X/B$ and $X'/B$  by $\calL_\la$ and $\calL'_\la$, respectively. 
\label{lem:char}\begin{enumerate}
\item We have $\sigma^*\xi=c_1(\calL_{-\al})=\bfc(x_{-\al})$. 
\item $y=p^*\sigma^*\xi=p^*\bfc(x_{-\al})$. 
\item For any $u\in S$, we have 
\[\sigma^*\bfc'(u)=\bfc(u), \quad \bfc'(u)=p^*\bfc(s_\al(u))+p^*\bfc(\Delta_{-\al}(u))\cdot \xi.\]
\end{enumerate}
\end{lem}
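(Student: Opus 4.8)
The plan is to prove Lemma \ref{chara} by reducing every statement to the projective bundle decomposition $h_T(X'/B)\cong h_T(X/B)[\xi]/(\xi^2-y\xi)$ together with the functorial behavior of Chern classes under the maps $\sigma$ and $p$. First I would set up the geometry carefully: since $X'=X\times^B P_\al$ is a $\mathbb{P}^1$-bundle over $X/B$ (the fiber being $P_\al/B\cong \mathbb{P}^1$), the projection $p:X'/B\to X/B$ admits the zero section $\sigma$, and $\xi=\sigma_*(1)$ is the class of the image divisor. The identity $\xi^2=y\xi$ with $y=p^*\sigma^*\xi$ is exactly the quadratic relation coming from the self-intersection formula $\sigma^*\sigma_*(1)=c_1(N_\sigma)$, where $N_\sigma$ is the normal bundle of the section. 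The whole lemma is the computation of this normal bundle and its interaction with the characteristic map.

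For part (1), the key geometric input is that the normal bundle of the zero section $\sigma:X/B\hookrightarrow X'/B$ is the line bundle $\calL_{-\al}$ associated to the character $-\al$. This is where I expect the main work to lie, and it is the genuine content of the lemma: one must identify the tangent direction of the $\mathbb{P}^1$-fiber at the point $1\cdot B\in P_\al/B$ with the root space, so that the weight of the $T$-action on $N_\sigma$ is $-\al$. Once this identification is in place, the self-intersection formula gives $\sigma^*\xi=\sigma^*\sigma_*(1)=c_1(N_\sigma)=c_1(\calL_{-\al})=\bfc(x_{-\al})$, using the definition of $\bfc$ from the displayed homomorphism $x_\la\mapsto c_1(\calL_\la)$.

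Part (2) is then immediate: by the definition $y=p^*\sigma^*\xi$ given in the projective bundle isomorphism, substituting the formula from part (1) yields $y=p^*\bfc(x_{-\al})$, with no further computation needed.

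For part (3), the relation $\sigma^*\bfc'(u)=\bfc(u)$ follows from naturality of the characteristic map under the section $\sigma$: since $\sigma^*\calL'_\la=\calL_\la$ (pulling back the associated line bundle along the section recovers the base line bundle), we get $\sigma^*\bfc'(x_\la)=\sigma^*c_1(\calL'_\la)=c_1(\calL_\la)=\bfc(x_\la)$, and since both $\sigma^*\circ\bfc'$ and $\bfc$ are ring homomorphisms agreeing on the generators $x_\la$, they agree on all of $S$. The second formula in (3) is the substantive one: I would write an arbitrary class in $h_T(X'/B)$ in the form $a+b\xi$ with $a,b\in p^*h_T(X/B)$ via the projective bundle decomposition, apply $\sigma^*$ (which sends $p^*c\mapsto c$ and $\xi\mapsto\bfc(x_{-\al})$ by parts (1)–(2)) to solve for $a$ and $b$ in terms of $\sigma^*\bfc'(u)$ and $\bfc'(u)$ itself, and then compare with the Demazure operator. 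Concretely, writing $\bfc'(u)=p^*\bfc(c_0)+p^*\bfc(c_1)\xi$ and using $\sigma^*\bfc'(u)=\bfc(u)$ from the first part together with $\sigma^*\xi=\bfc(x_{-\al})$, one finds $c_0+c_1 x_{-\al}=u$; a second relation comes from applying $s_\al$ or comparing with how $\bfc'$ transforms, and solving the resulting linear system over $S$ produces $c_0=s_\al(u)$ and $c_1=\Delta_{-\al}(u)$, matching the formulas \eqref{eq:m} and \eqref{eq:n} for the Demazure operator. The main obstacle throughout is the normal-bundle weight computation in part (1); everything else is formal manipulation within the projective bundle presentation, which is why the lemma is asserted to follow \emph{similarly as the non-equivariant version} in \cite[\S10]{CPZ13}.
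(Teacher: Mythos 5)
Your treatment of parts (1), (2) and of the first identity in (3) is essentially correct, and it is the route the paper intends: the paper itself gives no proof, deferring to the non-equivariant arguments of \cite[\S10]{CPZ13}. The self-intersection formula plus the $B$-module identification $\mathfrak{p}_{\al}/\mathfrak{b}\cong k_{-\al}$ gives $N_\sigma\cong \calL_{-\al}$ and hence (1) (you describe rather than carry out this normal-bundle computation, but it is standard); (2) is the definition of $y$; and $\sigma^*\calL'_\la\cong\calL_\la$ gives $\sigma^*\circ \bfc'=\bfc$.

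The genuine gap is in the second identity of (3), which you yourself call ``the substantive one'' but do not prove. The only equation your ingredients produce is the one obtained by applying $\sigma^*$, namely $c_0+c_1x_{-\al}=u$ --- and even extracting this from $\bfc(c_0)+\bfc(c_1)\bfc(x_{-\al})=\bfc(u)$ silently uses injectivity of $\bfc$, which is unavailable for general $X$; nor may you assume a priori that the coefficients of $\bfc'(u)$ in the basis $\{1,\xi\}$ lie in the image of $p^*\circ\bfc$. One equation cannot determine the pair $(c_0,c_1)$: both $(s_\al(u),\Delta_{-\al}(u))$ and $(u,0)$ satisfy it, since $s_\al(u)+\Delta_{-\al}(u)\,x_{-\al}=u$ identically in $S$. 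Yet $\bfc'(u)=p^*\bfc(u)$ is false in general, because $\calL'_\la$ has nonzero degree on the $\bbP^1$-fibers of $p$ whenever $\langle\la,\al^\vee\rangle\neq 0$, whereas any class of the form $p^*(-)$ restricts trivially to fibers. So your ``second relation comes from applying $s_\al$'' is exactly the missing content, and it cannot be obtained formally: for a general $X$ there is no action of $s_\al$ on $X'/B$ or on $h_T(X'/B)$, so some geometric input that sees the reflection must enter.

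That input is the identification of $\calL'_\la$ away from the section: on $U=X'/B\setminus\sigma(X/B)$ one has $\calL'_\la|_U\cong (p|_U)^*\calL_{s_\al(\la)}$ (it is conjugation by a representative of $s_\al$, on the open Bruhat cell of $P_\al/B$, that twists $\la$ into $s_\al(\la)$); equivalently, $\calL'_\la\cong p^*\calL_{s_\al(\la)}\otimes\calO(\sigma(X/B))^{\otimes -\langle\la,\al^\vee\rangle}$, with $\xi=c_1(\calO(\sigma(X/B)))$. From the latter isomorphism one computes $\bfc'(x_\la)$ by expanding $c_1$ via the formal group law and collapsing powers using $\xi^n=y^{n-1}\xi$; the crucial division by $x_{-\al}$ then takes place inside $S$, where $x_{-\al}$ is regular by the standing regularity assumption, rather than in $h_T(X/B)$, so no injectivity of $\bfc$ is ever needed. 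Finally, your sketch also omits the reduction from the generators $x_\la$ to all of $S$: one must verify that $u\mapsto p^*\bfc(s_\al(u))+p^*\bfc(\Delta_{-\al}(u))\cdot\xi$ is itself a ring homomorphism, and this is precisely where the relation $\xi^2=y\xi$ combines with the twisted Leibniz rule \eqref{eq:n}; in your write-up, \eqref{eq:m} and \eqref{eq:n} appear only as an after-the-fact consistency check, which does not do this job.
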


\section{Bott-Samelson Varieties}\label{sec:3}

In this section, we collect some facts about Bott-Samelson varieties .

\begin{dfn}
For any sequence  $I=(i_{1},i_{2},\ldots, i_{l})$ with $1\leq i_{j} \leq n$, we define the variety $\hX_{I}$ to be 

\[\hX_{I}=P_{i_{1}}\times^{B}P_{{i_{2}}}\times^{B}\ldots \times^{B} P_{i_{l}}/B.\]
Here the right $B$-action is given by right multiplication on the last coordinate. 
 If $I=\emptyset$, then we set $\hX_{\emptyset}=\pt.$ The variety  $\hX_{I}$ is called the Bott-Samelson variety corresponding to $I$. It has an obvious $T$-action by left multiplication on the first coordinate. 
\end{dfn}

Since $P_i/B\cong \bbP^1$, so we have a  sequence of $\mathbb{P}^{1}-$bundles:
\begin{equation}\label{eq:P1bundle}\xymatrix{\hX_{I} \ar[r] & \hX_{(i_{1},\ldots,i_{l-1})} \ar[r] \ar[r] \ar@/^6pt/[l]^-{\sigma_{l}} &\ldots  \ar@/^6pt/[l]^-{\sigma_{l-1}}\ar[r] & \hX_{(i_{1})} \ar[r] \ar@/^6pt/[l]^-{\sigma_{2}} & pt\ar@/^6pt/[l]^-{\sigma_{1}} , }
\end{equation} where $\sigma_{i}$, $1\leq i \leq l$ are the zero sections. Multiplication of all factors of $\hX_{I}$ induces a  map $$q_{I}:\hX_{I}\rightarrow G/B.$$
Denote by $\pi_i:G/B\to G/P_i$ the canonical map, and denote $I'=(i_1,...,i_{l-1})$.  We then have the following Cartesian diagram:
\begin{align}\label{eq:basechange} \xymatrix{\hX_{I} \ar[rr]^{q_{I}} \ar[d]^{p}& & G/B \ar[d]^{\pi_{\alpha_{l}}}  \\ \hX_{I'} \ar[rr]^{\pi_{\alpha_{l}}\circ q_{I'}}& & G/P_{\alpha_{l}}}.
\end{align}
So we have the  base-change formula ${q_{I}}_{*} p^{*}=\pi_{\alpha_{l}}^{*}(\pi_{\alpha_{l}} q_{I'})_{*}$. The operator 
\[\pi_{\alpha_{l}}^{*}\pi_{\alpha_{l}*}:h_T(G/B)\to h_T(G/B)\]
 is called the push-pull operator.

Denote by $\bfc_I: S\to \hX_I$ the characteristic map. The following proposition describes the $R$-algebra structure of equivariant oriented cohomology of Bott-Samelson varieties.
\begin{prop}\cite[\S11.3]{CPZ13} \label{prop:ring}
We have the following presentation
\[h_{T}(\hX_{I})\cong h_{T}(pt)[\eta_{1},\eta_{2},\ldots,\eta_{l}]/{(\left\{{\eta_{j}}^{2}-{y}_{j}{\eta}_{j}|{j=1,\ldots,l}\right\}}),
\] where 
\[y_{j}=p^{*}\bfc_{(i_{1},\ldots,i_{j-1})}(x_{-\alpha_{i_{j}}}), \quad \eta_{j}=p^{*}{\sigma_{j}}_{*}(1), \]
with $p^{*}$  the pull-back from $h_T(\hX_{(i_1,..., i_j)})$ to $h_{T}(\hX_{I})$. 
\end{prop}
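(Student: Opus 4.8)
The plan is to argue by induction on the length $l$ of the sequence $I$, using the tower of $\mathbb{P}^1$-bundles \eqref{eq:P1bundle} together with the single-step projective bundle isomorphism and the properties of the characteristic map recorded in Lemma~\ref{lem:char}. For the base case $l=0$ we have $\hX_\emptyset=\pt$, so $h_T(\hX_\emptyset)=h_T(\pt)=S$, and the empty-product convention makes the claimed presentation trivially correct.

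For the inductive step I would set $I'=(i_1,\ldots,i_{l-1})$ and focus on the last $\mathbb{P}^1$-bundle $p\colon\hX_I\to\hX_{I'}$ with zero section $\sigma_l$. Taking $X/B=\hX_{I'}$ and $X'/B=\hX_I$ in the projective bundle isomorphism recalled in Section~\ref{sec:2} yields
\[
h_T(\hX_I)\cong h_T(\hX_{I'})[\xi]/(\xi^2-y\xi),\qquad \xi=\sigma_{l*}(1),\quad y=p^*\sigma_l^*\xi,
\]
exhibiting $h_T(\hX_I)$ as free of rank two over $h_T(\hX_{I'})$ with basis $\{1,\xi\}$. By definition $\eta_l=\sigma_{l*}(1)=\xi$, and parts (1)--(2) of Lemma~\ref{lem:char} applied to this bundle identify the coefficient as $y=p^*\sigma_l^*\xi=p^*\bfc_{I'}(x_{-\alpha_{i_l}})=y_l$. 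Thus the top bundle contributes exactly the generator $\eta_l$ subject to the single relation $\eta_l^2-y_l\eta_l$.

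Next I would splice in the inductive hypothesis. The pullback $p^*\colon h_T(\hX_{I'})\to h_T(\hX_I)$ is a ring homomorphism, and for $j<l$ the pullback from $h_T(\hX_{(i_1,\ldots,i_j)})$ to $h_T(\hX_I)$ that defines $\eta_j$ and $y_j$ factors through $p^*$ and the corresponding pullback into $h_T(\hX_{I'})$. Hence the generators $\eta_1,\ldots,\eta_{l-1}$ and scalars $y_1,\ldots,y_{l-1}$ for $\hX_I$ are the $p^*$-images of those for $\hX_{I'}$, and the relations $\eta_j^2-y_j\eta_j=0$ for $j<l$, which hold in $h_T(\hX_{I'})$ by induction, persist in $h_T(\hX_I)$. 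Substituting the inductive presentation $h_T(\hX_{I'})\cong S[\eta_1,\ldots,\eta_{l-1}]/(\eta_j^2-y_j\eta_j)$ into the displayed isomorphism gives the asserted presentation, and since freeness is preserved at each stage of the tower the square-free monomials $\eta_L$, $L\subseteq[l]$, form an $S$-basis.

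The steps above are essentially bookkeeping once the tools are in place; the one point that will require genuine care is the identification of the degree-two relation. I would need to verify that the coefficient $y$ produced abstractly by the $\mathbb{P}^1$-bundle formula agrees with the explicit expression $y_l=p^*\bfc_{I'}(x_{-\alpha_{i_l}})$, and that the characteristic maps $\bfc_{(i_1,\ldots,i_j)}$ attached to the different stages are compatible under the pullbacks in the tower. Both facts are delivered by Lemma~\ref{lem:char} and the functoriality of the characteristic map, so the principal obstacle is tracking the naturality of these maps carefully rather than producing any new geometric input.
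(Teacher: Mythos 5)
Your proposal is correct and takes essentially the same route as the paper: the paper's own justification of Proposition~\ref{prop:ring} is simply that one applies the $\mathbb{P}^1$-bundle formula iteratively along the tower \eqref{eq:P1bundle}, with the equivariant one-step isomorphism and Lemma~\ref{lem:char} identifying $y=p^*\bfc_{I'}(x_{-\alpha_{i_l}})$, exactly the content of your induction. The only difference is that the paper leaves this as a citation to \cite[\S 11.3]{CPZ13} together with the remark that the non-equivariant arguments carry over, whereas you write out the inductive bookkeeping explicitly.
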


For ordinary oriented  cohomology, this theorem is proved in \cite{CPZ13}. The idea of the proof is to apply the projective bundle formula to the sequence of  $\mathbb{P}^{1}-$bundle \eqref{eq:P1bundle}. One can check that all the arguments hold in the equivariant setting, which can be used to prove Proposition \ref{prop:ring}. 

For each subset $L\in \calP_l$, define 
\[
\eta_L=\prod_{j\in L}\eta_{j}\in h_T(\hX_I). 
\]
\begin{cor} The $S$-module $h_{T}(\hX_I)$ is free with basis $\left\{\eta_L|L\in \calP_l \right\}.$
\end{cor}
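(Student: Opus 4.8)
The plan is to deduce the corollary directly from the ring presentation in Proposition~\ref{prop:ring}. By that proposition, the $S$-algebra $h_T(\hX_I)$ is the quotient of the polynomial ring $S[\eta_1,\ldots,\eta_l]$ by the ideal generated by the $l$ quadratic relations $\eta_j^2 - y_j\eta_j$, one for each $j\in[l]$. The key observation is that each relation is monic of degree two in the single variable $\eta_j$ and involves no other $\eta$-variables, so the relations form a Gr\"obner-type regular system that lets us normalize any monomial.

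First I would argue that the monomials $\eta_L=\prod_{j\in L}\eta_j$, for $L\in\calP_l$, span $h_T(\hX_I)$ over $S$. Starting from an arbitrary element of the quotient, written as a polynomial in the $\eta_j$ with coefficients in $S$, I would repeatedly use the relation $\eta_j^2=y_j\eta_j$ to lower the exponent of any $\eta_j$ that appears to a power $\geq 2$. Since $y_j\in S$, each such reduction strictly decreases the total degree in that variable while keeping the coefficients in $S$, so the process terminates and expresses the element as an $S$-linear combination of squarefree monomials, i.e.\ of the $\eta_L$. This shows the $2^l$ elements $\{\eta_L\mid L\in\calP_l\}$ generate $h_T(\hX_I)$ as an $S$-module.

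For linear independence (hence freeness), the cleanest route is to invoke the projective bundle formula inductively along the tower \eqref{eq:P1bundle} rather than to argue abstractly about the quotient. At the top of the tower, $p:\hX_I\to\hX_{I'}$ is a $\bbP^1$-bundle, and the projective bundle axiom gives that $h_T(\hX_I)$ is free as a module over $h_T(\hX_{I'})$ with basis $\{1,\eta_l\}$. Writing any relation $\sum_{L}c_L\eta_L=0$ with $c_L\in S$ and splitting the sum according to whether $l\in L$, the coefficient of $\eta_l$ (as an element of $h_T(\hX_{I'})$) and the coefficient of $1$ must each vanish by this freeness. Each of these is again an $S$-linear combination of the $\eta_{L'}$ for $L'\subseteq[l-1]$, so by induction on $l$ (the base case $\hX_\emptyset=\pt$ being trivial) all $c_L$ vanish. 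Thus $\{\eta_L\}$ is an $S$-basis.

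The main technical point to get right is that the generator $\eta_l$ at the top stage, defined in Proposition~\ref{prop:ring} as $p^*\sigma_{l*}(1)$, is exactly the class giving the second element of the projective-bundle basis, so that the abstract presentation and the geometric $\bbP^1$-bundle decomposition match up compatibly under $p^*$ at each stage; this is what lets the inductive projective-bundle argument and the spanning argument from the presentation refer to the same $\eta_L$. Once that compatibility is noted, the statement follows formally, and no genuinely new computation beyond the reduction and the inductive application of the projective bundle formula is required.
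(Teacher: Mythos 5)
Your second argument (induction up the tower \eqref{eq:P1bundle} via the projective bundle formula, using that $\eta_l=\sigma_{l*}(1)$ is the second basis element of the $\bbP^1$-bundle $p:\hX_I\to \hX_{I'}$ and that the remaining $\eta_j$, $j<l$, are pulled back from $\hX_{I'}$) is correct, and it is essentially how the paper obtains the corollary: the statement is presented as an immediate consequence of Proposition \ref{prop:ring}, which is itself proved by iterating the projective bundle formula along the tower. Note also that this induction gives spanning as well as independence, since at each stage $\{1,\eta_l\}$ is a \emph{basis} of $h_T(\hX_I)$ over $h_T(\hX_{I'})$, not merely an independent set; so your first argument is not actually needed.

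That is fortunate, because your first (spanning) argument rests on a false claim: you assert $y_j\in S$. The paper states the opposite explicitly, right after this corollary (``the $y_j$ does not belong to the coefficient ring $h_T(\pt)$''). By definition $y_j=p^*\bfc_{(i_1,\ldots,i_{j-1})}(x_{-\al_{i_j}})$ lives in $h_T(\hX_I)$, and by Lemma \ref{lem:ind} it equals $\sum_{L\subset[j-1]}\theta_{j-1,L}(x_{-\al_{i_j}})\,\eta_L$, a nontrivial $S$-combination of square-free monomials in $\eta_1,\ldots,\eta_{j-1}$. Consequently the reduction $\eta_j^2\mapsto y_j\eta_j$ can \emph{increase} degrees in the variables $\eta_k$ with $k<j$ and create new squares there (e.g.\ when the monomial being reduced already contains such an $\eta_k$), so your termination argument --- ``strictly decreases the total degree in that variable, hence terminates'' --- fails as stated. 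The reduction can be salvaged by exploiting triangularity: since $y_j$ involves only indices $k<j$, one eliminates squares of $\eta_l$ first, then $\eta_{l-1}$, and so on downward, and this does terminate. But the cleanest fix is simply to drop the first argument and let the projective-bundle induction carry both spanning and independence.
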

Since in Proposition \ref{prop:ring}, the $y_j$ does not belong to the coefficient ring $h_T(\pt)$, so the presentation of $h_T(\hX)$ is not satisfactory. To get a polynomial presentation of it, we follow the idea in \cite[Theorem 11.4]{CPZ13}. 
\begin{lem}\label{lem:ind}
For any sequence $I=(i_1,...,i_l)$, we have 
\[
\bfc_{I}(u)=\sum_{L\subset [l]} \theta_{I,L}(u)\eta_{L}, \quad u\in S, 
\]
where $\theta_{l,L}=\theta_{1}\cdots \theta_{l}$ with $\theta_{j}= \begin{cases} \Delta_{-\alpha_{i_{j}}}, & \mbox{if } j\in L  , \\ s_{{i_{j}}}, &  \mbox{ otherwise}.  \end{cases}$
\end{lem}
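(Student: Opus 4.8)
The plan is to induct on the length $l$ of the sequence $I$. When $l=0$ we have $\hX_\emptyset=\pt$, the characteristic map $\bfc_\emptyset$ is the identity of $S$, and the only subset is $L=\emptyset$, for which $\eta_\emptyset=1$ and $\theta_{I,\emptyset}$ is the empty composition (hence the identity of $S$); thus both sides equal $u$ and the base case holds immediately.

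For the inductive step I write $I'=(i_1,\ldots,i_{l-1})$ and realize $\hX_I$ as the fiber product $\hX_{I'}\times^B P_{\alpha_{i_l}}$, so that $\hX_I$ is the top $\bbP^1$-bundle over $\hX_{I'}$ in \eqref{eq:P1bundle}. I then specialize Lemma \ref{lem:char} to this situation by taking $X/B=\hX_{I'}$, $X'/B=\hX_I$, $\alpha=\alpha_{i_l}$, $\bfc=\bfc_{I'}$ and $\bfc'=\bfc_I$. Under this identification the section class is $\xi=\sigma_{l*}(1)=\eta_l$ by Proposition \ref{prop:ring}, and part (3) of Lemma \ref{lem:char} yields
\[
\bfc_I(u)=p^*\bfc_{I'}(s_{i_l}(u))+p^*\bfc_{I'}(\Delta_{-\alpha_{i_l}}(u))\cdot\eta_l.
\]

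Next I would apply the inductive hypothesis to $\bfc_{I'}$ at the two arguments $s_{i_l}(u)$ and $\Delta_{-\alpha_{i_l}}(u)$. The bookkeeping point is that $p^*$ is a homomorphism of $S$-algebras (the structure maps to $h_T(\pt)$ factor as $\pi_I=\pi_{I'}\circ p$) which carries each generator $\eta_j$ with $j\le l-1$ on $\hX_{I'}$ to the corresponding generator on $\hX_I$, by the very definition in Proposition \ref{prop:ring}. Hence $p^*\bfc_{I'}(v)=\sum_{L'\subset[l-1]}\theta_{I',L'}(v)\,\eta_{L'}$ for every $v\in S$, and substituting $v=s_{i_l}(u)$ and $v=\Delta_{-\alpha_{i_l}}(u)$ gives
\[
\bfc_I(u)=\sum_{L'\subset[l-1]}\theta_{I',L'}(s_{i_l}(u))\,\eta_{L'}+\sum_{L'\subset[l-1]}\theta_{I',L'}(\Delta_{-\alpha_{i_l}}(u))\,\eta_{L'\cup\{l\}}.
\]

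Finally I would reorganize this as a sum over all $L\subset[l]$ according to whether $l\notin L$ or $l\in L$. In the first case $L=L'$ and the coefficient is $(\theta_1\cdots\theta_{l-1})(s_{i_l}(u))$, which equals $\theta_{I,L}(u)$ since $\theta_l=s_{i_l}$ when $l\notin L$; in the second case $L=L'\cup\{l\}$ and the coefficient is $(\theta_1\cdots\theta_{l-1})(\Delta_{-\alpha_{i_l}}(u))=\theta_{I,L}(u)$ since $\theta_l=\Delta_{-\alpha_{i_l}}$ when $l\in L$. This reproduces the asserted formula and closes the induction. I expect the only genuine subtlety to be the verification that $p^*$ respects both the $S$-module structure and the generators $\eta_j$, so that the inductive hypothesis pulls back cleanly; once that is secured, the two cases assemble automatically into the single composition $\theta_1\cdots\theta_l$.
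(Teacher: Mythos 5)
Your proof is correct and follows essentially the same route as the paper's: induction on $l$, applying Lemma \ref{lem:char}(3) to the top $\bbP^1$-bundle $p:\hX_I\to\hX_{I'}$ with $\xi=\eta_l$, and splitting the sum over $L\subset[l]$ according to whether $l\in L$. Your explicit verification that $p^*$ is an $S$-algebra map sending each generator $\eta_j$ of $h_T(\hX_{I'})$ to the corresponding generator of $h_T(\hX_I)$ is a point the paper leaves implicit, but it is the same argument.
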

\begin{proof}We use induction on $l$. If $l=1$, from Lemma \ref{lem:char}, we have  $$\bfc_{i_{1}}(u)=p^*\bfc_{\emptyset}(s_{{i_{1}}}(u))+p^{*}\bfc_{\emptyset}(\Delta_{-\al_{i_{1}}}(u))\cdot \eta_{1}.$$ 
Note that the characteristic map $\bfc_{\emptyset}:S\rightarrow h_{T}(pt)$ is the identity map. So it holds.

Now assume the conclusion holds for $I':=(i_{1},\ldots, i_{l-1})$. Denote $p:\hX_I\to \hX_{I'}$. By Lemma \ref{lem:char} we have
\begin{align*} \bfc_{I}(u)&=p^{*}\bfc_{I'}(s_{{i_{l}}}(u))+p^{*}\bfc_{I'}(\Delta_{-\al_{i_{l}}}(u))\cdot \eta_{l} \\
&=\sum_{L\subset [l-1]}\theta_{l-1,L}(s_{i_{l}}(u)) \eta_{L}+\sum_{L\in [l-1]}\theta_{l-1, L}(\Delta_{-\al_{i_j}}(u)) \eta_{L}\cdot \eta_{l}\\
&=\sum_{L\subset [l] }\theta_{l,L}(u)\eta_L.
\end{align*}
\end{proof}
\begin{prop}\cite[Theorem 11.4]{CPZ13}
The ring $h_{T}(\hX_I)$ is a quotient of the polynomial ring $S[\eta_{1},\eta_{2},\ldots,\eta_{l}] $ modulo the relations 
\[\eta_{j}^{2}=\sum_{L\subset [j-1]}\theta_{j-1,L}(x_{-\alpha_{i_{j}}})\eta_{L}\eta_{j}, \quad j\in [l].\]
\end{prop}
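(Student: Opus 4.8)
The plan is to take the presentation of Proposition~\ref{prop:ring} and make it genuinely polynomial over $S$: the only defect in that presentation is that the coefficient $y_j$ a priori lives in $h_T(\hX_{(i_1,\ldots,i_{j-1})})$ rather than in $S$, so the relation $\eta_j^2 = y_j\eta_j$ is not yet a relation in $S[\eta_1,\ldots,\eta_l]$. Lemma~\ref{lem:ind} is precisely the tool that expresses such a value of the characteristic map as an $S$-linear combination of the $\eta_L$, so the proof should amount to substituting that lemma into Proposition~\ref{prop:ring}.

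Concretely, I would recall from Proposition~\ref{prop:ring} that $h_T(\hX_I)$ is generated as an $S$-algebra by $\eta_1,\ldots,\eta_l$ subject to $\eta_j^2 = y_j\eta_j$ with $y_j = p^*\bfc_{(i_1,\ldots,i_{j-1})}(x_{-\alpha_{i_j}})$. Applying Lemma~\ref{lem:ind} to the sequence $(i_1,\ldots,i_{j-1})$ and the element $u = x_{-\alpha_{i_j}}\in S$ gives
\[
\bfc_{(i_1,\ldots,i_{j-1})}(x_{-\alpha_{i_j}}) = \sum_{L\subset[j-1]}\theta_{j-1,L}(x_{-\alpha_{i_j}})\,\eta_L.
\]
Each coefficient $\theta_{j-1,L}(x_{-\alpha_{i_j}})$ lies in $S=h_T(\pt)$, and $p^*$ is an $S$-algebra homomorphism carrying the lower classes $\eta_L$ (for $L\subset[j-1]$) to the corresponding classes of $h_T(\hX_I)$; applying $p^*$ therefore yields $y_j = \sum_{L\subset[j-1]}\theta_{j-1,L}(x_{-\alpha_{i_j}})\eta_L$. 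Multiplying through by $\eta_j$ produces the asserted relation
\[
\eta_j^2 = \sum_{L\subset[j-1]}\theta_{j-1,L}(x_{-\alpha_{i_j}})\,\eta_L\eta_j,
\]
and since $j\notin L$ we have $\eta_L\eta_j = \eta_{L\cup\{j\}}$, so the right-hand side is an honest $S$-linear combination of the squarefree monomials.

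To see that these quadratic relations give a \emph{complete} presentation, I would note that the surjection $S[\eta_1,\ldots,\eta_l]\twoheadrightarrow h_T(\hX_I)$ factors through the quotient by the ideal generated by the displayed relations; because each relation rewrites $\eta_j^2$ in terms of the $\eta_{L\cup\{j\}}$ with $L\subset[j-1]$, this quotient is spanned over $S$ by the squarefree monomials $\eta_L$, $L\in\calP_l$. These $\eta_L$ form an $S$-basis of $h_T(\hX_I)$ by the corollary following Proposition~\ref{prop:ring}, so the induced surjection is also injective, hence an isomorphism, and the ideal is exactly the one generated by the quadratic relations. The one point requiring care — rather than a genuine obstacle — is the compatibility of the characteristic maps and the classes $\eta_L$ along the tower of $\mathbb{P}^1$-bundles, namely that $p^*$ sends $\bfc_{(i_1,\ldots,i_{j-1})}$ to (the relevant restriction of) $\bfc_I$ and sends the lower $\eta_L$ to the upper $\eta_L$; this is built into the inductive construction via Lemma~\ref{lem:char} but should be stated explicitly before the substitution. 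Everything else is a direct computation from Lemma~\ref{lem:ind} together with the freeness of the $\eta_L$ basis.
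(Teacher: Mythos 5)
Your proposal is correct and follows essentially the same route as the paper: apply Lemma~\ref{lem:ind} to the truncated sequence $(i_1,\ldots,i_{j-1})$ with $u=x_{-\alpha_{i_j}}$, pull back along $p^*$ to identify $y_j$ as an $S$-linear combination of the $\eta_L$, and substitute into $\eta_j^2=y_j\eta_j$ from Proposition~\ref{prop:ring}. The only difference is that you additionally spell out why the quadratic relations give a complete presentation (spanning by squarefree monomials plus freeness of the $\eta_L$ basis), a point the paper leaves implicit.
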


\begin{proof}
Denote $K=(i_1,...,i_{j-1})$ and $p:\hX_I\to \hX_K$. By definition of $y_j$ and Lemma \ref{lem:ind}, we have 
\[
y_{j}=p^{*}\bfc_{K}(x_{\al_{i_{j}}})=p^*(\sum_{L\subset [j-1]}\theta_{j-1,L}(x_{-\alpha_{i_{j}}})\eta_{L})=\sum_{L\subset [j-1]}\theta_{j-1,L}(x_{-\al_{i_j}})\eta_L. 
\] The statement then follows from the fact that $\eta_{j}^{2}=y_{j}\eta_j.$
\end{proof}
\begin{ex}
For $SL(4)$ whose simple roots are $\alpha_{1},\al_{2},\al_{3},$ let's consider Bott-Salmelson $\hat{X_{I}}=P_{\al_{1}}\times^{B} P_{\al_{2}}\times^{B} P_{\al_{3}}/B$. Then $h_{T}(\hat{X_{I}})$ is a polynomial algebra generated by $\eta_{1},\eta_{2},\eta_{3}$ with following quotient relation:
\begin{align*}
 &\eta_{1}^{2}=x_{-\al_{1}}\eta_{1},\\
&\eta_{2}^{2}=x_{-\al_{1}-\al_{2}}\eta_{1}+\frac{x_{-\al_{2}}-x_{\al_{1}-\al_{2}}}{x_{-\al_{1}}}\eta_{1}\eta_{2},\\
&\eta_{3}^{2}=x_{\al_{1}-\al_{2}-\al_{3}}\eta_{3}+\frac{x_{-\al_{3}-\al_{2}}-x_{2\al_{1}-\al_{2}-\al_{3}}}{x_{-\al_{1}}}\eta_{1}\eta_{3}+\frac{x_{\al_{3}}-x_{\al_{1}+\al_{2}-\al_{3}}}{x_{-\al_{1}-\al_{2}}}\eta_{2}\eta_{3}\\ &+(\frac{x_{-\al_{3}-x_{\al_{2}-\al_{3}}}}{x_{-\al_{2}}x_{-\al_{1}}}-\frac{x_{-\al_{3}}-x_{\al_{2}-\al_{1}-\al_{3}}}{x_{\al_{1}-\al_{2}}x_{-\al_{1}}})\eta_{1}\eta_{2}\eta_{3}.   \\
\end{align*}
\end{ex}
Let us consider some geometry information of $\hX$, and the  $T$-fixed points. We fix some notation first.
For any $L \subset[l]$, define
\[(\hX_{I})_{L} =\{[g_{1},g_{2},\ldots,g_{l}]\in \hX_I|~g_{j}\in B \text{ if } j\notin L, \text{ and }g_{i} \notin B \text{ if }j\in L\}\subset \hX_I,
\]
and 
\[v_j^L=\prod_{k\in L\cap [j]}s_{i_k}, \quad v^L:=v^L_l=\prod_{k\in L}s_{i_k}.\]
The following lemma will be used in the proof of Theorem \ref{thm:GKM}. 

\begin{lem}\label{lem:dist}If $I=(i_1,...,i_l)$ is a sequence such that $i_j$ are all distinct, then for any $L\subset[l]$ and $j\in L^c$, $v^L_{j-1}(x_{-\al_{i_j}})$ are all distinct.
\end{lem}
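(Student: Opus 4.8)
The plan is to reduce the statement to a claim about roots and then settle it by tracking coefficients in the basis of simple roots. From the $W$-action $s_{\al}(x_\lambda)=x_{s_\al(\lambda)}$ one gets $w(x_\lambda)=x_{w(\lambda)}$ for every $w\in W$, so applying this to $w=v^L_{j-1}$ yields
\[
v^L_{j-1}(x_{-\al_{i_j}})=x_{-\beta_j},\qquad \beta_j:=v^L_{j-1}(\al_{i_j}).
\]
Hence it is enough to show that the roots $\beta_j$, for $j\in L^c$, are pairwise distinct; the implication from distinct roots to distinct elements of $S$ is addressed at the end.

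First I would expand each $\beta_j$ in the basis of simple roots. By definition $v^L_{j-1}=\prod_{k\in L\cap[j-1]}s_{i_k}$ is a product of simple reflections whose indices $k$ all lie in $L$ and satisfy $k<j$. A simple reflection acts by $s_{\al_m}(u)=u-\langle u,\al_m^\vee\rangle\al_m$, so it alters only the coefficient of $\al_m$ and leaves every other coefficient unchanged. Applying the reflections constituting $v^L_{j-1}$ to $\al_{i_j}$ can therefore only introduce multiples of the roots $\al_{i_k}$ with $k\in L$, $k<j$, while the coefficient of $\al_{i_j}$ is never touched — here I use crucially that $j\notin L$, so $j$ differs from each such $k$. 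Thus
\[
\beta_j=\al_{i_j}+\sum_{k\in L,\ k<j}c_k\,\al_{i_k}
\]
for suitable integers $c_k$, and in particular the coefficient of $\al_{i_j}$ in $\beta_j$ equals $1$.

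Now I would exploit the hypothesis that the $i_k$ are distinct, which lets me index the simple roots occurring in $\beta_j$ by their positions $k\in[l]$: the position $k=j$ occurs (with coefficient $1$), while every other occurring position is $<j$. So $j$ is exactly the largest position appearing in the support of $\beta_j$, whence the assignment $j\mapsto\beta_j$ is injective on $L^c$; if $j\neq j'$ the roots $\beta_j$ and $\beta_{j'}$ have different maximal supporting positions and so are different. The argument is essentially bookkeeping, and I expect the only genuinely delicate point to be the passage from distinct roots to distinct elements of $S$. I would make this precise by observing that for $j\neq j'$ the difference $\beta_j-\beta_{j'}$ has a coefficient $\pm1$ on the simple root indexed by $\max(j,j')$; since its linear part is then nonzero, the assignment $\lambda\mapsto x_\lambda$ separates $-\beta_j$ from $-\beta_{j'}$, and the classes $v^L_{j-1}(x_{-\al_{i_j}})$ are pairwise distinct as claimed.
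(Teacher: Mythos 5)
Your reduction to distinctness of the roots $\beta_j=v^L_{j-1}(\al_{i_j})$, the expansion $\beta_j=\al_{i_j}+\sum_{k\in L,\,k<j}c_k\al_{i_k}$, and the ``largest supporting position'' argument are correct, and in substance they coincide with the paper's proof: the paper fixes $j_1<j_2$ in $L^c$ and splits into the cases $L\cap[j_1]=L\cap[j_2]$ and $L\cap[j_1]\subsetneq L\cap[j_2]$, but in both cases the operative fact is exactly the one you isolate, namely that the coefficient of $\al_{i_j}$ in $\beta_j$ is $1$ while only simple roots $\al_{i_k}$ with $k\in L$, $k<j$ can otherwise occur. Your single argument subsumes the paper's two cases; this is a mild streamlining rather than a different route.

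The one genuine issue is in your last step, and you located it correctly: the passage from $\beta_j\neq\beta_{j'}$ in $\La$ to $x_{-\beta_j}\neq x_{-\beta_{j'}}$ in $S$. Your justification---a coefficient $\pm1$ on a simple root forces the linear part of the difference to be nonzero---is not valid in the stated generality, because the linear part lives in $\mathcal{I}/\mathcal{I}^2\cong R\otimes_{\bbZ}\La$ ($\mathcal{I}$ the augmentation ideal), and the simple roots need not be part of a $\bbZ$-basis of $\La$. Concretely, for simply connected $A_2$ one has $\al_1-\al_2=3(\omega_1-\omega_2)$ with $\omega_1-\omega_2\in\La$; so if $3=0$ in $R$ (a case the paper's standing hypotheses allow, since $2$ is then invertible, hence regular), then for the additive law $x_{\al_1}=x_{\al_2}$, so the lemma's conclusion itself fails for $I=(1,2)$, $L=\emptyset$, while for the multiplicative law the two elements are distinct but their difference lies in $\mathcal{I}^3$, so the linear-part criterion cannot detect it. Thus your final inference genuinely fails as written. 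You should know, however, that the paper's own proof makes exactly the same unjustified leap: in its Case 1 it declares the two elements unequal ``since $\al_{i_{j_1}}\neq\al_{i_{j_2}}$,'' and in its Case 2 it infers distinctness of the $x$'s from linear independence of the characters. So your proof is on par with the paper's, and both become complete under any additional hypothesis making $\la\mapsto x_\la$ injective on differences of roots, e.g.\ $\bbZ\hookrightarrow R$, or $\La$ equal to the root lattice, or $F$ the multiplicative formal group law.
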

\begin{proof}
Suppose $j_{1},j_{2}\in L^{c} $ and $j_{1}<j_{2}$. Then $L\cap [j_1]\subseteq L\cap [j_2]$. There are two cases. 

Case 1: $L\cap [j_1]=L\cap [j_2]$.  Then 
\[v_{j_{1}-1}^{L}(x_{-\al_{i_{j_{1}}}})=\prod_{k\in L\cap [j_{1}]}s_{i_{k}}(x_{-\al_{i_{j_{1}}}}),
\]
and 
\[v_{j_{2}-1}^{L}(x_{-\al_{i_{j_{2}}}}) =\prod_{k\in L\cap [j_{2}]}s_{i_{k}}(x_{-\al_{i_{j_{2}}}})=\prod_{k\in L\cap [j_{1}]}s_{i_{k}}(x_{-\al_{i_{j_{2}}}}).
\] They are not equal since $\al_{i_{j_{1}}}\neq\al_{i_{j_{2}}}$. 

Case 2.  $L\cap [j_1]\subsetneq L\cap [j_2]$. Denote $M= (L\cap [j_2])\backslash (L\cap [j_1])$.  Then
\[v_{j_{1}-1}^{L}(x_{-\al_{i_{j_{1}}}})=\prod_{k\in L\cap [j_{1}]}s_{i_{k}}(x_{-\al_{i_{j_{1}}}}), \quad 
v_{j_{2}-1}^{L}(x_{-\al_{i_{j_{2}}}})=\prod_{k\in L\cap [j_{1}]}s_{i_{k}}(\prod_{k'\in M}s_{{i_{k'}}}(x_{-\al_{i_{j_{2}}}})).\]
By definition of the Weyl group action, 
\[\prod_{k'\in M}s_{{i_{k'}}}(x_{-\al_{i_{j_{2}}}})=x_{\prod_{k'\in M}s_{{i_{k'}}}({-\al_{i_{j_{2}}}})}=x_{-\al_{i_{j_{2}}}+\sum_{k'\in M}c_{k'}\al_{i_{k'}}},\quad c_{k'}\in \bbZ, 
\]which is different from $x_{-\al_{i_{j_{1}}}}$, since the set $\{-\al_{i_{j_1}}, -\al_{i_{j_2}}, \pm\al_{i_{k'}}|k'\in M\}$ is linearly independent. Thus $v_{j_{1}-1}^{L}(x_{-\al_{i_{j_{1}}}})$ and $v_{j_{2}-1}^{L}(x_{-\al_{i_{j_{2}}}})$ are not equal to each other. 
\end{proof}

The following lemma recalled from \cite[Proposition 2.6]{W04}, provides some geometric information of the Bott-Samelson variety, which is useful for our computation. 
\begin{lem}\label{lem:Wi}
\begin{enumerate}
\item 
The set $\hX_I^{T}$ of $T$-fixed points in $\hX_I$, consists of  $2^{l}$ points $$ [g_{1},g_{2},\ldots,g_{l}]$$ where $g_{j}\in \left\{e,s_{i_{j}}\right\}.$ Here we think of $s_{i_j}$ as in $W\cong N_G(T)/T$ and pick a preimage for $s_{i_j}$ in $N_G(T)\subset G$. Consequently, we have bijection of sets from the power set $\calP_l:=\mathcal{P}([l])$ to  $\hX_I^T$, 
\[
L\mapsto \pt_L:= [g_1,...,g_l], \quad g_j=\left\{\begin{array}{cc} s_{i_j}, &\text{ if }j\in L, \\
e, & \text{if } j\not \in L.\end{array}\right.
\]
\item The set $(\hX_{I})_{L}$ is a  $T$-orbit containing the fixed point $\pt_L$, and isomorphic  to the affine space of dimension $|L|$. The variety $\hX_I$ has a decomposition $\coprod_{L\in \mathcal{E}_I}(\hX_{I})_{L}$. 

\item Suppose $L,L'\subset [l]$.  then $\pt_L \in \overline{(\hX_I)_{L'}} $ if and only if $L \subset L'$.  The weights of the $T$-action  on the tangent space of $\overline{(\hX_I)_{L'}}$ at $\pt_L$ are 
\[
\{-v_j^L(\al_{i_j})|j\in L'\}. 
\]
\end{enumerate}
\end{lem}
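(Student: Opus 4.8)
The plan is to induct on the length $l$ via the $\bbP^1$-bundle $p\colon \hX_I\to \hX_{I'}$ of \eqref{eq:P1bundle}, where $I'=(i_1,\dots,i_{l-1})$ and the fibre is $P_{i_l}/B\cong\bbP^1$; the base case $l=0$ is $\hX_\emptyset=\pt$ with its single fixed point $\pt_\emptyset$, which settles everything.

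For part (1), any $T$-fixed point of $\hX_I$ maps under $p$ to some $\pt_K$, $K\subset[l-1]$, and lies in the fibre over it. The key step is to identify the $T$-action on that fibre: writing a representative $[g_1,\dots,g_{l-1},g_l]$ with $g_1,\dots,g_{l-1}\in N_G(T)$ chosen so that $g_1\cdots g_{l-1}$ represents $v_{l-1}^K$, one pushes $t\in T$ through the first $l-1$ factors using $g_k^{-1}tg_k\in T\subset B$, so that $t$ acts on $g_l\in P_{i_l}/B$ by left multiplication by the conjugate $(v_{l-1}^K)^{-1}\,t\,v_{l-1}^K$. Since conjugation by $v_{l-1}^K$ is an automorphism of $T$, this is again the standard nontrivial action on $\bbP^1$, with the two fixed points $g_l\in\{e,s_{i_l}\}$. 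Induction then produces the $2^l$ points $\pt_L$ and the bijection with $\calP_l$.

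For parts (2) and (3), I would use the $T$-stable fibrewise decomposition $P_{i_j}/B=\{[e]\}\sqcup(Bs_{i_j}B/B)$ with $Bs_{i_j}B/B\cong\mathbb{A}^1$. Pulling it back along the tower shows that each $(\hX_I)_L$ is a $T$-stable locally closed subvariety isomorphic to $\mathbb{A}^{|L|}$ (indeed a single $B$-orbit), that these strata decompose $\hX_I$, and that $\pt_L$ is the unique $T$-fixed point contained in $(\hX_I)_L$. Passing to closures fibrewise, $\overline{Bs_{i_j}B/B}=\bbP^1$ while the factors $j\notin L'$ stay at $[e]$; hence $\overline{(\hX_I)_{L'}}$ is the sub-Bott--Samelson variety obtained by setting $g_j=e$ for all $j\notin L'$, whose $T$-fixed points are exactly the $\pt_L$ with $L\subset L'$, giving the closure criterion. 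Finally $\overline{(\hX_I)_{L'}}$ is smooth of dimension $|L'|$, and its tangent space at $\pt_L$ splits along the tower into the relative tangent lines of the $\bbP^1$-bundles indexed by $j\in L'$. On the $j$-th line the untwisted weight is the tangent weight of $P_{i_j}/B$ at $[g_j]$, namely $-\al_{i_j}$ when $g_j=e$ (i.e.\ $j\notin L$) and $s_{i_j}(-\al_{i_j})=\al_{i_j}$ when $g_j=s_{i_j}$ (i.e.\ $j\in L$); twisting by $v_{j-1}^L$ as in part (1) and using $v_j^L=v_{j-1}^L s_{i_j}$ for $j\in L$ (resp.\ $v_j^L=v_{j-1}^L$ for $j\notin L$) collapses both cases to the single weight $-v_j^L(\al_{i_j})$.

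The heart of the argument, and the main obstacle, is this last computation: one must simultaneously track the conjugation-twist of the $T$-action accumulated through the first $j-1$ factors and the choice of fixed point in the $j$-th $\bbP^1$, and check that sign and Weyl-group twist conspire --- uniformly in the two cases $j\in L$ and $j\notin L$ --- to produce $-v_j^L(\al_{i_j})$. The remaining assertions are a routine induction on the bundle structure \eqref{eq:P1bundle}.
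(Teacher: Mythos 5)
Your sketch is correct in substance, but there is nothing in the paper to compare it against: the paper does not prove this lemma, it imports it verbatim from Willems \cite[Proposition 2.6]{W04}. What you have written is therefore a self-contained replacement for a citation, along the standard lines (and, in spirit, Willems' own): induct along the tower \eqref{eq:P1bundle}, and over a fixed point $\pt_K$ of the base identify the $T$-action on the fibre $P_{i_l}/B$ as left multiplication precomposed with conjugation by a representative of $v^K_{l-1}$. The part you correctly single out as the heart of the matter checks out: the untwisted tangent weight of $P_{i_j}/B$ is $-\al_{i_j}$ at $e$ and $s_{i_j}(-\al_{i_j})=\al_{i_j}$ at $s_{i_j}$, and the twist by $v^L_{j-1}$ merges the two cases into the single weight $-v^L_j(\al_{i_j})$, using $v^L_j=v^L_{j-1}s_{i_j}$ for $j\in L$ and $v^L_j=v^L_{j-1}$ for $j\notin L$; this is consistent with Example \ref{ex:A2fixed}. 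Making this twisting mechanism explicit is a real gain, since the same bookkeeping silently underlies the paper's later computations (Lemma \ref{lem:BSchar}, Theorem \ref{thm:res}); the only thing the paper's citation buys over your argument is brevity.

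One parenthetical claim in your part (2) is false and should be deleted: the cells $(\hX_I)_L$ are in general \emph{not} single $B$-orbits. Take $G=SL_2$ and $I=(1,1)$. The cell $(\hX_I)_{\{1,2\}}$ is isomorphic to $\mathbb{A}^2$ via $(u_1,u_2)\mapsto [u_1s_1,u_2s_1]$ with $u_1,u_2\in U_{\al_1}$; but for $b\in B$ one has $bs_1=us_1b'$ with $b'$ necessarily diagonal, hence $b\cdot[s_1,s_1]=[us_1,b's_1]=[us_1,s_1]$, so the $B$-orbit of $\pt_{\{1,2\}}$ is the one-dimensional locus $\{(u,e)\}$. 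The failure is tied to repeated indices, and the lemma concerns arbitrary sequences $I$. This does not damage your proof, because the only property you actually use is that each cell is $T$-stable and locally closed with $\pt_L$ as its unique fixed point --- which, incidentally, is also how the lemma's own loose phrase ``$T$-orbit containing the fixed point $\pt_L$'' must be read, since a literal $T$-orbit through a fixed point would be a single point.
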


\begin{ex} \label{ex:A2fixed}For the $A_2$-case, consider $\hX_{(1,2)}=P_1\times^B P_2/B$. There are four $T$-fixed points, denoted by $\{00, 01, 10, 11\}$, corresponding to $\{[e,e], [e,s_2], [s_1,e], [s_1,s_2]\}$, or $\emptyset, \{2\}, \{1\}, \{1,2\}$ as subsets of $[2]$. The weights of the tangent spaces of $\hX_{(1, 2)}$ at the four points are:
\[\begin{matrix}
00:&-\al_1 ,  -\al_2 & 01: &-\al_1 ,  \al_2\\
10:&\al_1 ,  -\al_1-\al_2 & 11: & \al_1 , \al_1+\al_2. 
\end{matrix}
\]
\end{ex}

We denote the set of functions on $\mathcal{E}_I=\hX_I^T$ with values in $S$ by $F(\mathcal{E}_I;S)$. It is  a free $S$-module with basis $f_L, L\in \mathcal{E}_I$ defined by $f_{L}(L')=\delta_{L,L'}$, and have a ring structure  given by \[
f_L\cdot f_{L'}=\de_{L,L'}f_L. \]
Moreover, we have \[h_T(\hX_I)\cong F(\mathcal{E}_I;S),\]
where the fixed point $\pt_L$ corresponds to the basis element $f_L$.

Denote $\bj^I:\hX_I^T\to \hX_I$. For each $L\subset[l] $, denote by $\bj_L^I$ the embedding of $\pt_L$ into $\hX_I$. Sometimes we will skip the superscript $I$ for simplicity. Then 
\[
\bj^*(f)=\sum_{L\subset[l]}\bj_L^*(f)f_L, \quad f\in h_T(\hX_I). 
\]
Denote 
\begin{align}\label{eq:xIL}
x_{I,L}=\prod_{1\le j\le l}v^L_j(x_{-\al_{i_j}}). 
\end{align}
We have
\begin{lem} \label{lem:injpoint}For any $L\subset[l]$, we have $\bj^*\bj_*(f_L)=x_{I, L}f_L$. 
\end{lem}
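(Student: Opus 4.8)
The plan is to reduce this global computation to a collection of local self-intersection calculations at the individual fixed points, exploiting the decomposition $\bj=\coprod_{L'}\bj_{L'}$ of the inclusion of the fixed locus. First I would rewrite the left-hand side. Since $f_L\in F(\mathcal{E}_I;S)=h_T(\hX_I^T)$ is precisely the push-forward of $1$ along the inclusion $\pt_L\hookrightarrow\hX_I^T$, the push-forward formula \eqref{eq:finite} together with functoriality of push-forwards gives $\bj_*(f_L)=\bj_{L*}(1)$. Applying the componentwise description $\bj^*(g)=\sum_{L'\subset[l]}\bj_{L'}^*(g)f_{L'}$ then reduces the claim to showing
\[
\bj_{L'}^*\bj_{L*}(1)=\begin{cases} x_{I,L}, & L'=L,\\ 0, & L'\neq L.\end{cases}
\]

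For the off-diagonal terms I would use base change. When $L'\neq L$ the points $\pt_L$ and $\pt_{L'}$ are distinct, hence disjoint closed subvarieties of the smooth variety $\hX_I$; their scheme-theoretic intersection is empty, so the two embeddings are Tor-independent. The base-change axiom for such Cartesian squares in oriented cohomology identifies $\bj_{L'}^*\bj_{L*}(1)$ with a push-pull passing through $h_T(\pt_L\times_{\hX_I}\pt_{L'})=h_T(\emptyset)=0$, so this term vanishes.

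For the diagonal term $\bj_L^*\bj_{L*}(1)$ I would invoke the self-intersection formula: for the regular embedding $\bj_L$ of the point $\pt_L$ into the smooth variety $\hX_I$, the composite $\bj_L^*\bj_{L*}$ is multiplication by the equivariant Euler class (top Chern class) of the normal bundle, which here is the tangent representation $T_{\pt_L}\hX_I$. By Lemma \ref{lem:Wi}(3), applied with $L'=[l]$ (so that $\overline{(\hX_I)_{[l]}}=\hX_I$, since the top-dimensional cell $(\hX_I)_{[l]}$ is open and dense), the weights of this representation are $\{-v_j^L(\alpha_{i_j})\mid j\in[l]\}$. Since the top Chern class of a direct sum of line bundles is the product of their first Chern classes, and the weight-$\mu$ direction contributes $c_1(\mathcal{L}_\mu)=x_\mu$, the Euler class equals $\prod_{j=1}^l x_{-v_j^L(\alpha_{i_j})}$. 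Finally, the Weyl-equivariance relation $v_j^L(x_{-\alpha_{i_j}})=x_{v_j^L(-\alpha_{i_j})}=x_{-v_j^L(\alpha_{i_j})}$ rewrites this product as $\prod_{j=1}^l v_j^L(x_{-\alpha_{i_j}})=x_{I,L}$, exactly the quantity defined in \eqref{eq:xIL}.

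The main obstacle is the diagonal step: correctly identifying $\bj_L^*\bj_{L*}(1)$ with the Euler class of the tangent space and, above all, matching the sign and Weyl-action conventions so that the weights $-v_j^L(\alpha_{i_j})$ of Lemma \ref{lem:Wi}(3) assemble into exactly $x_{I,L}$ rather than a twisted variant. The off-diagonal vanishing, although it formally needs the base-change axiom, is routine.
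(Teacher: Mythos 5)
Your proof is correct and takes essentially the same route as the paper: the paper's own proof simply cites the self-intersection axiom of equivariant oriented cohomology (\cite[\S2.A8]{CZZ3}) together with Lemma \ref{lem:Wi} on the tangent weights at $\pt_L$, which is precisely your diagonal computation $\bj_L^*\bj_{L*}(1)=\prod_{j=1}^l x_{-v_j^L(\alpha_{i_j})}=x_{I,L}$. Your explicit handling of the off-diagonal vanishing and of the sign/Weyl-action conventions just spells out details the paper leaves implicit in that citation.
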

\begin{proof}	This follows from \cite[\S2.\textbf{A}8]{CZZ3}, and Lemma \ref{lem:Wi} concerning the weights of the tangent space of $\hX_I$ at the point $L$. 
\end{proof}

\begin{ex}Following Example \ref{ex:A2fixed}, with $I=(\al_1,\al_2)$, we have
\[
x_{I,00}=x_{-\al_1}x_{-\al_2}, \quad x_{I,10}=x_{\al_1}x_{-\al_1-\al_2}, \quad x_{I,01}=x_{-\al_1}x_{\al_2}, \quad x_{I,11}=x_{\al_1}x_{\al_1+\al_2}. 
\]
\end{ex}

\section{Restriction to T-fixed points}\label{sec:4}

In this section, we compute the restriction formula of the $\eta_L$ basis. 
We first compute the restriction formula of the image of the characteristic map. 

\begin{lem}Let $I$ be a sequence of length $l$, and $\bfc_I:S\to \hX_I$ be the characteristic map, then
 \[
\bj^*\bfc_I(u)=\sum_{L\subset[l]}v^L(u)f_L. 
\]
\label{lem:BSchar}
\end{lem}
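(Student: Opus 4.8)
The plan is to prove the formula by induction on the length $l$ of $I$, using the inductive description of the characteristic map from Lemma \ref{lem:char}(3) together with the geometry of the top $\bbP^1$-bundle $p:\hX_I\to\hX_{I'}$, where $I'=(i_1,\dots,i_{l-1})$. The base case $l=0$ is immediate: $\hX_\emptyset=\pt$, the unique fixed point corresponds to $L=\emptyset$, the characteristic map $\bfc_\emptyset$ is the identity, and $v^\emptyset=e$, so $\bj^*\bfc_\emptyset(u)=u\,f_\emptyset=v^\emptyset(u)f_\emptyset$.

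For the inductive step I would apply Lemma \ref{lem:char}(3) with $\bfc'=\bfc_I$ and $\bfc=\bfc_{I'}$ to write
\[
\bfc_I(u)=p^*\bfc_{I'}(s_{i_l}(u))+p^*\bfc_{I'}(\Delta_{-\al_{i_l}}(u))\cdot \eta_l,
\]
where $\eta_l=\xi=\sigma_{l*}(1)$. Fixing $L\subset[l]$ and restricting at $\pt_L$, I use that the bundle projection $p$ sends $\pt_L$ to $\pt_{L\cap[l-1]}$ (it forgets the last coordinate), so functoriality gives $\bj_L^*p^*=(\bj^{I'}_{L\cap[l-1]})^*$, and the inductive hypothesis yields $\bj_L^*p^*\bfc_{I'}(w)=v_{l-1}^L(w)$ for all $w\in S$, with $v_{l-1}^L=\prod_{k\in L\cap[l-1]}s_{i_k}$. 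This reduces the claim to evaluating
\[
\bj_L^*\bfc_I(u)=v_{l-1}^L(s_{i_l}(u))+v_{l-1}^L(\Delta_{-\al_{i_l}}(u))\cdot\bj_L^*(\eta_l).
\]

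The crux is therefore to compute $\bj_L^*(\eta_l)=\bj_L^*\xi$, and I would split into two cases according to whether $l\in L$, using that the image of the zero section $\sigma_l$ is exactly the locus whose last coordinate lies in $B$. If $l\notin L$, then $\pt_L$ lies on the zero section, precisely $\bj_L=\sigma_l\circ\bj^{I'}_{L\cap[l-1]}$; the self-intersection identity $\sigma_l^*\xi=\sigma_l^*\sigma_{l*}(1)=\bfc_{I'}(x_{-\al_{i_l}})$ of Lemma \ref{lem:char}(1) then gives $\bj_L^*\xi=v_{l-1}^L(x_{-\al_{i_l}})$. If $l\in L$, then the last coordinate of $\pt_L$ is $s_{i_l}\notin B$, so $\pt_L$ is disjoint from the image of $\sigma_l$; the fibre product $\pt_L\times_{\hX_I}\hX_{I'}$ is empty and transverse base change forces $\bj_L^*\sigma_{l*}(1)=0$. (Alternatively, the relation $\xi^2=y_l\xi$ with $y_l=p^*\bfc_{I'}(x_{-\al_{i_l}})$ shows $\bj_L^*\xi$ is a root of $t^2=v_{l-1}^L(x_{-\al_{i_l}})\,t$ in the domain $S$, and the disjoint-support argument selects the root $0$.) I expect this vanishing in the case $l\in L$ to be the main obstacle, since it is the only step where genuine geometry beyond the inductive bundle formula is needed.

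Finally I would combine the two cases. When $l\notin L$, since $v_{l-1}^L$ is a ring homomorphism and $\Delta_{-\al_{i_l}}(u)\,x_{-\al_{i_l}}=u-s_{i_l}(u)$, the right-hand side collapses to $v_{l-1}^L\bigl(s_{i_l}(u)+\Delta_{-\al_{i_l}}(u)x_{-\al_{i_l}}\bigr)=v_{l-1}^L(u)=v^L(u)$. When $l\in L$ the second term vanishes and $v^L=v_{l-1}^L s_{i_l}$, so $\bj_L^*\bfc_I(u)=v_{l-1}^L(s_{i_l}(u))=v^L(u)$. In both cases $\bj_L^*\bfc_I(u)=v^L(u)$, which is exactly the asserted expansion $\bj^*\bfc_I(u)=\sum_{L\subset[l]}v^L(u)f_L$, completing the induction.
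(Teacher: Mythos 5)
Your proof is correct and follows essentially the same route as the paper: induction on $l$ via Lemma \ref{lem:char}, with the case split on whether $l\in L$, the vanishing $\bj_L^*\sigma_{l*}(1)=0$ from disjointness of $\pt_L$ and the zero section, and the identity $s_{i_l}(u)+\Delta_{-\al_{i_l}}(u)x_{-\al_{i_l}}=u$ (the paper starts the induction at $l=1$ citing \cite{CZZ3}, while you start at the trivial case $l=0$, which is a harmless simplification). One small caveat: your parenthetical alternative argument for the case $l\in L$ assumes $S$ is a domain, which need not hold for a general coefficient ring $R$, but since your main disjoint-support argument carries the step, this does not affect the proof.
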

\begin{proof}We prove it by induction on the length $l$ of $I$. If $I=(i_1)$, then there are two points in $(P_{i_1}/B)^T$, corresponding to $e$ and $s_1$ (or $\emptyset$ and $[1]$ as subsets of $[1]$). Then from \cite[\S10]{CZZ3} we have 
\[
\bj^* \bfc_I(u)=uf_e+s_{i_1}(u)f_{s_{i_1}}.
\]
So the conclusion follows.

 Now assume it holds for all sequence of length $\le l-1$, and assume $I=(i_1,...,i_l)$. Denote $I'=(i_1,...,i_{l-1})$ and $\sigma:\hX_{I'}\to \hX_I$ the zero section.  By induction assumption, for each $L'\subset[l-1]$, we have
\begin{align}\label{eq:l}
(\bj^{I'}_{L'})^*\bfc_{I'}(u)=v^{L'}_{l-1}(u). 
\end{align}

Concerning $L\subset [l]$, we  have two cases: 

Case 1:  $l\in L$. In this case, $\pt_L\not\in \sigma(\hX_{I'})$, so 
\begin{align}\label{eq:sig}
(\bj^I_L)^*\circ \sigma_*=0.
\end{align}
 Moreover, we have the following commutative diagram 
\[
\xymatrix{\pt\ar[r]^{\bj_L^I} \ar[rd]_{\bj^{I'}_{L\backslash \{l\}}}& \hX_I\ar[d]^p\\
                  & \hX_{I'},}
\]
that is, $p\circ \bj^I_L=\bj^{I'}_{L\backslash \{l\}}$, so 
\begin{align}\label{eq:bl}
(\bj^I_L)^*\circ p^*=(\bj^{I'}_{L\backslash \{l\}})^*.
\end{align}
Denote $\xi=\sigma_*(1)$, then by Lemma \ref{lem:char}, we have
\begin{align*}
(\bj_L^I)^*\circ \bfc_I(u)&=(\bj_L^I)^*[p^*\bfc_{I'}(s_{i_l}(u))+p^*\bfc_{I'}(\Delta_{-\al_{i_l}}(u))\cdot \xi]\\
&=(\bj_L^I)^*p^*\bfc_{I'}(s_{i_l}(u))+(
\bj_L^I)^*p^*\bfc_{I'}(\Delta_{-\al_{i_l}}(u))\cdot (\bj_L^I)^*(\sigma_*(1))\\
&\overset{\sharp_1}=(\bj^{I'}_{L\backslash \{l\}})^*\bfc_{I'}(s_{i_l}(u))\\
&\overset{\sharp_2}=v^{L\backslash \{l\}}_{l-1}\circ s_{i_l}(u)=v^L_l(u).
	\end{align*}
Here the identity $\sharp_1$ follows from \eqref{eq:sig} and \eqref{eq:bl}, and $\sharp_2$ follows from \eqref{eq:l}. 

Case 2:  $l\not \in L$. In this case, we can view $L\subset [l-1]$, so we have commutative diagrams:
\[
\xymatrix{\pt\ar[r]^{\bj^I_L}\ar[rd]_{\bj^{I'}_L} & \hX_I\ar[d]^p\\
            & \hX_{I'}}, \quad \xymatrix{\pt\ar[r]^{\bj^I_L}\ar[dr]_{\bj^{I'}_L} & \hX_I\\
                               & \hX_{I'}\ar[u]_{\sigma}},  
\] 
so  $p\circ \bj^I_L=\bj^{I'}_L$ and $\sigma\circ \bj^{I'}_L=\bj^I_L$. The latter implies that 
\begin{align}\label{eq:sig2}
(\bj^I_L)^*\sigma_*(1)=(\bj^{I'}_L)^*\sigma^*\sigma_*(1)\overset{\text{Lem.} \ref{lem:char}}=(\bj^{I'}_L)^*\bfc_{I'}(x_{-\al_{i_l}}). 
\end{align} Therefore, 
\begin{align*}
(\bj_L^I)^*(\bfc_I(u))&=(\bj_L^I)^*[p^*\bfc_{I'}(s_{i_l}(u))+p^*\bfc_{I'}(\Delta_{-\al_{i_l}}(u))\cdot \xi]\\
  &=(\bj_L^I)^*p^*\bfc_{I'}(s_{i_l}(u))+(\bj_L^I)^*p^*\bfc_{I'}(\Delta_{-\al_{i_l}}(u))\cdot (\bj_L^I)^*(\sigma_*(1))\\
  &=(\bj^{I'}_L)^*\bfc_{I'}(s_{i_l}(u))+(\bj^{I'}_L)^*\bfc_{I'}(\Delta_{-\al_{i_l}}(u))\cdot (\bj^{I'}_L)^*\bfc_{I'}(x_{-\al_{i_l}})\\
  &=(\bj^{I'}_L)^* \bfc_{I'}(s_{i_l}(u)+\frac{u-s_{i_l}(u)}{x_{-\al_{i_l}}}x_{-\al_{i_l}})\\
   &=(\bj^{I'}_L)^*\bfc_{I'}(u)\\
  &=v^{L}_{l-1}(u)=v^L_{l}(u). 
\end{align*}
The proof is finished. 
\end{proof}

Before computing the restriction formula of $\eta_L$, we first consider an example. 

\begin{ex} \label{ex:rank2} Consider the case of $A_2$. Let $\left\{\alpha_{1},\alpha_{2}\right\}$ be the set of simple roots. We consider the Bott-Samelson variety $\hX_I=P_{{1}}\times^{B} P_{{2}}/B$ for $I=(1,2)$. Following Example \ref{ex:A2fixed}, there are four torus-fixed points, denoted by $\calP_2=\{00, 01, 10, 11\}$.  Similarly, denote $(P_{1}/B)^T$ by $\calP_1=\{0,1\}$.  Denote $\bj^{I}:\mathcal{E} _I\hookrightarrow \hX_I$ and $\bj^1:\calP_1\hookrightarrow (P_{1}/B)^T$.  Consider the following commutative  diagram:
\[\xymatrix{
P_{{1}}\times^{B} P_{{2}}/B  \ar[d]^{p_{2}}  & \calP_2=\left\{00, 01, 10, 11\right\} \ar[l]^-{\bj^I} \ar[d]^{p_2'}\\ 
P_{{1}}/B \ar[d]^{p_{1}} 
\ar@/^/[u]^{\sigma_{2}} & \calP_1=\left\{0,1\right\} \ar[l]^{\bj^{1}}\\ 
 \pt \ar@/^/[u]^{\sigma_{1}}&
}.
\]
Here $\sigma_i$ are the zero sections, $p_2'$ is induced by the projection map $p_2$, so it maps $00, 01$ to $0$,  and $10$ and $11$  to $1$. Moreover, by definition, $\bj^1_0=\sigma_1$, and $ \sigma_2\circ\bj^1_{i}=\bj^I_{i0}$ for $i=0,1$.  We have 
\[
\eta_1=p_2^*\sigma_{1*}(1), \quad \eta_2=\sigma_{2*}(1), 
\]
and 
\[h_T((\hX_I)^T)=S\{f_{00}, f_{01}, f_{10}, f_{11}\}, \quad h_T((P_{1}/B)^T)=S\{f_0, f_1\}.\]
Denote $\bfc_1:S\to h_T(P_1/B)$. 

First of all, from the definition of $p_2'$ and \eqref{eq:finite}, we know 
\[
p_2'^*(f_0)=f_{00}+f_{01}, \quad p_2'^*(f_1)=f_{10}+f_{11}. 
\]
Moreover, since $\bj_0^1$ coincides with $\sigma_1$ and $\bj^1_1(\pt)\not \in \sigma_1(\pt)$, so $(\bj^1_1)^*\sigma_{1*}=0$ and  \[(\bj^1)^*\sigma_{1*}(1)=(\bj^1_0)^*\sigma_{1*}(1)=\sigma_1^*\sigma_{1*}(1)=x_{-\al_1}f_0,\]
where the last identity follows from the fact that the tangent space of $P_1/B$ at $0$ has weight $-\al_1$. Hence, 
\begin{align}\label{eq:eta1}
(\bj^I)^*(\eta_1)=(\bj^I)^*p_2^*\sigma_{1*}(1)=p_2'^{*}(\bj^1)^*\sigma_{1*}(1)=p_2'^{*}(x_{-\al_1}f_0)=x_{-\al_1}(f_{00}+f_{01}). 
\end{align}

We then compute $(\bj^I)^*(\eta_2)$, by using the identity
\[
(\bj^I)^*(\eta_2)=\sum_{x\in \calP_3}(\bj_x^I)^*(\eta_2)f_x. 
\]
 Since $01, 11\not\in \sigma_2(P_{1}/B)$, so $(\bj^I_{01})^*(\eta_2)=(\bj^I_{11})^*(\eta_2)=0$. From Lemma \ref{lem:char}, we know that $\sigma_2^*\sigma_{2*}(1)=\bfc_1(x_{-\al_2})$. So 
\[
(\bj^I_{00})^*(\eta_2)=(\bj^I_{00})^*\sigma_{2*}(1)=(\bj^1_0)^*\sigma_2^*\sigma_{2*}(1)=(\bj^1_0)^*(\bfc_1(x_{-\al_2}))\overset{\sharp}=x_{-\al_2},
\]
where $\sharp$ follows from Lemma \ref{lem:BSchar}. 
 Similarly, from $\bj^I_{10}=\sigma_2\circ \bj^1_1$, we have
\[
(\bj^I_{10})^*(\eta_2)=(\bj^I_{10})^*\sigma_{2*}(1)=(\bj^1_1)^*\sigma_2^*\sigma_{2*}(1)=(\bj^1_1)^*(\bfc_1(x_{-\al_2}))=s_1(x_{-\al_2})=x_{-\al_1-\al_2}.
\]
 Therefore, 
\begin{align}\label{eq:eta2}
(\bj^I)^*(\eta_2)=x_{-\al_2}f_{00}+x_{-\al_1-\al_2}f_{10}. 
\end{align}
\end{ex}

Now we compute the restriction formula of $\eta_L$. 
\begin{thm}\label{thm:res}
Let $I$ be a sequence of length $l$. For any two subsets $L,M\subset [l]$, denote $L^c=[l]\backslash L$, and 
\[
a_{L,M}=\prod_{k\in L}v_{k-1}^{M}(x_{-\al_{i_{k}}}). 
\]
Then 
\begin{align*}
\bj^*(\eta_L)=\sum_{M\subset L^c}a_{L,M}f_{M}. 
\end{align*}
\end{thm}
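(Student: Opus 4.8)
The plan is to prove the formula by induction on the length $l$ of $I$, exploiting the tower structure $p:\hX_I\to\hX_{I'}$ with $I'=(i_1,\ldots,i_{l-1})$ and zero section $\sigma=\sigma_l:\hX_{I'}\to\hX_I$. The base case $l=0$ is trivial, since $\hX_\emptyset=\pt$ and $\eta_\emptyset=1$. For the inductive step the key observation is that the generators are compatible with the tower: writing $\eta_j^{I'}$ for the generators of $h_T(\hX_{I'})$ and $\eta_L^{I'}=\prod_{j\in L}\eta_j^{I'}$, the definitions in Proposition \ref{prop:ring} give $\eta_j=p^*\eta_j^{I'}$ for $j\le l-1$ and $\eta_l=\sigma_*(1)$. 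Consequently, for $L\subset[l]$ there are two cases. If $l\notin L$ then $\eta_L=p^*\eta_L^{I'}$; if $l\in L$, writing $L'=L\setminus\{l\}$, the projection formula together with $\sigma^*p^*=\id$ gives $\eta_L=p^*(\eta_{L'}^{I'})\cdot\sigma_*(1)=\sigma_*(\eta_{L'}^{I'})$.

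I would then compute $\bj^*(\eta_L)=\sum_{M\subset[l]}(\bj_M^I)^*(\eta_L)f_M$ fixed point by fixed point, using the commutative diagrams relating $\bj_M^I$, $p$ and $\sigma$ already established in the proof of Lemma \ref{lem:BSchar}. In the case $l\notin L$, for $l\notin M$ one has $p\circ\bj_M^I=\bj_M^{I'}$ and for $l\in M$ one has $p\circ\bj_M^I=\bj_{M\setminus\{l\}}^{I'}$; in either situation $(\bj_M^I)^*(\eta_L)=(\bj_{M'}^{I'})^*(\eta_L^{I'})$ for the appropriate $M'\subset[l-1]$, so the induction hypothesis applies directly. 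A short check, using that every $k\in L\subset[l-1]$ satisfies $k-1<l$ so that $v_{k-1}^M=v_{k-1}^{M\setminus\{l\}}$, shows the resulting coefficient equals $a_{L,M}$ exactly when $M\subset L^c$ and vanishes otherwise.

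The main work is the case $l\in L$, where the push-forward $\sigma_*$ forces us to move a fixed-point restriction past a Gysin map. For $l\in M$ the fixed point $\pt_M$ lies off the image of $\sigma$, so $(\bj_M^I)^*\sigma_*=0$ and the coefficient vanishes, consistently with $M\not\subset L^c$. For $l\notin M$ we have $\sigma\circ\bj_M^{I'}=\bj_M^I$, hence $(\bj_M^I)^*=(\bj_M^{I'})^*\sigma^*$; the crucial input is the self-intersection identity $\sigma^*\sigma_*(a)=a\cdot\bfc_{I'}(x_{-\al_{i_l}})$, which follows from $\sigma_*(a)=p^*(a)\cdot\xi$ (projection formula) together with $\sigma^*\xi=\bfc_{I'}(x_{-\al_{i_l}})$ of Lemma \ref{lem:char}. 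Applying this yields
\[
(\bj_M^I)^*(\eta_L)=(\bj_M^{I'})^*(\eta_{L'}^{I'})\cdot(\bj_M^{I'})^*\bfc_{I'}(x_{-\al_{i_l}}).
\]
Here the second factor is $v_{l-1}^M(x_{-\al_{i_l}})$ by Lemma \ref{lem:BSchar}, while the first is $a_{L',M}$ by the induction hypothesis (nonzero precisely when $M\subset L^c=[l-1]\setminus L'$). Since $a_{L,M}=a_{L',M}\cdot v_{l-1}^M(x_{-\al_{i_l}})$, the extra factor being exactly the $k=l$ term in the product defining $a_{L,M}$, this matches the claim.

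I expect the self-intersection formula to be the conceptual heart of the argument, and the indexing bookkeeping, namely verifying that the product $\prod_{k\in L}v_{k-1}^M(x_{-\al_{i_k}})$ factors correctly across the induction and that the support condition $M\subset L^c$ is reproduced in each case, to be the most error-prone step. Everything else reduces to the compatibility diagrams and the already-proven restriction formula for the characteristic map.
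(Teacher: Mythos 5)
Your proof is correct, but it is organized differently from the paper's. The paper first proves the singleton case $L=\{k\}$ by a direct computation: writing $\eta_k=p^*\sigma_{k*}(1)$, it restricts $\sigma_{k*}(1)$ to the fixed points of the partial tower $\hX_{I_k}$ (zero off the image of $\sigma_k$, and $\sigma_k^*\sigma_{k*}(1)=\bfc_{I_{k-1}}(x_{-\al_{i_k}})$ plus Lemma \ref{lem:BSchar} on the image), and then transports the answer to $\hX_I$ through the induced map $p'$ on fixed-point sets. The general case then follows in one line from the fact that $\bj^*$ is a ring homomorphism and the idempotents satisfy $f_M\cdot f_{M'}=\de_{M,M'}f_M$, so $\bj^*(\eta_L)=\prod_{k\in L}\bj^*(\eta_k)$ collapses to a sum over common $M\subset L^c$. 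You instead run a single induction on the length $l$, treating all $L$ at once; your key new ingredients are the identity $\eta_L=\sigma_*(\eta_{L'}^{I'})$ when $l\in L$ (projection formula plus $\sigma^*p^*=\id$) and the self-intersection formula $\sigma^*\sigma_*(a)=a\cdot\bfc_{I'}(x_{-\al_{i_l}})$ for a \emph{general} class $a$, whereas the paper only ever needs $\sigma^*\sigma_*(1)$. Both arguments rest on the same foundational inputs (Lemma \ref{lem:char}, Lemma \ref{lem:BSchar}, vanishing of restrictions at fixed points off the zero section, and the compatibility diagrams for $\bj_M$, $p$, $\sigma$), and your case bookkeeping, including the observation $v^M_{k-1}=v^{M\setminus\{l\}}_{k-1}$ for $k\le l-1$ and the matching of the support condition $M\subset L^c$ in each case, checks out. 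What each route buys: the paper's multiplicativity trick makes the passage from singletons to general $L$ essentially free; your induction is more self-contained and has the conceptual merit of exhibiting $\eta_L$ as a push-forward of a class from a smaller Bott--Samelson variety, which foreshadows the identity $q_{I*}(\eta_L)=q_{L^c*}(1)$ of Corollary \ref{cor:pushBott}.
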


\begin{proof}We first consider $L=\{k\}$, and prove the following identity
\[
\bj^*(\eta_k)=\sum_{M\subset  L^c}v^M_{k-1}(x_{-\al_{i_k}})f_M.
\]
 Denote $I_k=(i_1,...,i_k)$, and similarly denote $I_{k-1}$. Firstly, we compute $(\bj^{I_k}_M)^*\sigma_{k*}(1)$ for each $M\subset[k]$, with $\sigma_k:\hX_{I_{k-1}}\to \hX_{I_k}$. 

If $k\in M$, then the point $\bj^{I_k}_M(\pt)\not\in \sigma_k(\hX_{I_{k-1}})$, so 
\[(\bj^{I_k}_M)^*\sigma_{k*}(1)=0.\]
 If $k\not\in M$, then $M\subset[k-1]$, $v^M_k=v^M_{k-1}$, and  we have the following commutative diagram
\[
 \xymatrix{           \pt\ar[r]^{\bj^{I_k}_M}\ar[dr]_{\bj^{I_{k-1}}_{M}}                           &\hX_{I_k}\\
                                       &\hX_{I_{k-1}}  \ar[u]_{\sigma_k}.  }
\]
Therefore, 
\begin{align}\label{eq:k}
(\bj^{I_k}_M)^*\sigma_{k*}(1)=(\bj^{I_{k-1}}_M)^*\sigma_k^*\sigma_{k*}(1)\overset{\text{Lem.} \ref{lem:char}}=(\bj^{I_{k-1}}_M)^*\bfc_{I_{k-1}}(x_{-\al_{i_k}})\overset{\text{Lem.} \ref{lem:BSchar}}= v^{M}_{k-1}(x_{-\al_{i_k}}). 
\end{align}
Here $\bfc_{I_{k-1}}$ is the characteristic map on $\hX_{I_{k-1}}$. 

Now consider the following commutative diagram
\[
\xymatrix{(\hX_I)^T\ar[r]^{\bj^I}\ar[d]^{p'}& \hX_I\ar[d]^p\\
(\hX_{I_k})^T\ar[r]^{\bj^{I_k}} &\hX_{I_k}.}
\]
We have 
\begin{align*}
(\bj^I)^*(\eta_k)&=(\bj^I)^*p^*(\sigma_{k*}(1))=p'^*(\bj^{I_k})^*\sigma_{k*}(1)\\
&=p'^*[\sum_{M\subset[k]}(\bj^{I_k}_M)^*\sigma_{k*}(1)f_M]\\
&\overset{\eqref{eq:k}}=p'^*[\sum_{M\subset [k-1]}v^M_{k-1}(x_{-\al_{i_k}})f_M]\\
&\overset{\eqref{eq:finite}}=\sum_{M\subset [k-1]}v^M_{k-1}(x_{-\al_{i_k}})\sum_{M'\subset \{k+1,...l\}}f_{M\cup M'}\\
&=\sum_{M\subset( [l]\backslash \{k\})}v^M_{k-1}(x_{-\al_{i_k}})f_M. 
\end{align*}
So the case $L=\{k\}$ is proved. 

Now for a general subset $L\subset[l]$, we have
\[
\bj^*(\eta_L)=\prod_{k\in L}\bj^*(\eta_k)=\prod_{k\in L}\sum_{M\subset ([l]\backslash \{k\})}v^M_{k-1}(x_{-\al_{i_k}})f_M=\sum_{M\subset L^c}\prod_{k\in L}v^M_{k-1}(x_{-\al_{i_k}})f_M.
\]
\end{proof}

For $I$ of length $l$ and $L\subset[l]$, note  the difference between 
\[a_{[l], L}=\prod_{1\le k\le l}v^L_{k-1}(x_{-\al_{i_k}}), \quad x_{I,L}=\prod_{1\le k\le l}v^L_{k}(x_{-\al_{i_k}}). 
\]
They are only related when $L=[l]$, in which case we have
\[
a_{[l], [l]}=\prod_{1\le k\le l}v^{[l]}_{k-1}(x_{-\al_{i_k}})=\prod_{1\le k\le l}v^{[l]}_{k}(x_{\al_{i_k}}),\quad 
x_{I,[l]}=\prod_{1\le k\le l}v^{[l]}_{k}(x_{-\al_{i_k}}). 
\]

\begin{cor}\label{cor:inj}The map $\bj^{*}:h_T(\hX_I)\to h_T(\hX_I^T)$ is an injection. 
\end{cor}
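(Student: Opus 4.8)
The plan is to deduce the injectivity of $\bj^*$ directly from the explicit restriction formula in Theorem \ref{thm:res}. Since $\{\eta_L \mid L\subset[l]\}$ is an $S$-basis of $h_T(\hX_I)$ and $\{f_M \mid M\subset[l]\}$ is an $S$-basis of $h_T(\hX_I^T)$, the map $\bj^*$ is described by a matrix over $S$ whose $(L,M)$-entry is the coefficient of $f_M$ in $\bj^*(\eta_L)$. By Theorem \ref{thm:res} this coefficient is $a_{L,M}=\prod_{k\in L}v_{k-1}^M(x_{-\al_{i_k}})$ when $M\subset L^c$, and is zero otherwise. So the goal reduces to showing this matrix is injective over $S$.

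The key structural observation is that the condition ``$M\subset L^c$'' forces a triangularity. First I would fix a total order on the power set $\calP_l$ that refines inclusion, for instance ordering subsets by cardinality with ties broken arbitrarily, so that $|M|<|L|$ implies $M$ precedes $L$. The nonzero entry $a_{L,M}$ requires $M\subset L^c=[l]\setminus L$, hence $M\cap L=\emptyset$; in particular $M=L$ is only possible when $L=\emptyset$. More usefully, for the diagonal I would pair $L$ with the complementary index $M=L^c$: then $M\subset L^c$ holds (with equality), so the entry $a_{L,L^c}=\prod_{k\in L}v^{L^c}_{k-1}(x_{-\al_{i_k}})$ is a product of elements of the form $x_{-\la}$, each of which is regular in $S$ by the regularity hypothesis on the root datum together with the fact that $\frac{x_\la}{x_{-\la}}$ is a unit. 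Reindexing the columns by $M\mapsto M^c$, the matrix becomes triangular with regular diagonal entries.

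Concretely, I would argue as follows. Suppose $\sum_L c_L\eta_L$ lies in the kernel of $\bj^*$, with $c_L\in S$ not all zero. Applying $\bj^*$ and collecting the coefficient of each $f_M$ gives the system $\sum_{L\subset M^c} c_L\, a_{L,M}=0$ for every $M$. I would then run a descending induction on $|L|$: take a maximal $L_0$ (with respect to cardinality, then the chosen tie-break) among those with $c_{L_0}\neq 0$, and read off the coefficient of $f_{L_0^c}$. The only terms $c_L a_{L,L_0^c}$ surviving are those with $L\subset (L_0^c)^c=L_0$; by maximality of $L_0$ the only one with nonzero coefficient is $L=L_0$ itself, yielding $c_{L_0}\,a_{L_0,L_0^c}=0$. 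Since $a_{L_0,L_0^c}$ is a product of regular elements and hence regular in $S$, this forces $c_{L_0}=0$, a contradiction. Therefore all $c_L=0$ and $\bj^*$ is injective.

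The main obstacle is pinning down the triangularity cleanly: one must verify that for the maximal $L_0$, no other $L$ with $c_L\neq 0$ contributes to the $f_{L_0^c}$-coefficient, which rests precisely on the implication $a_{L,L_0^c}\neq 0 \Rightarrow L\subset L_0$. This is immediate from the support condition $M\subset L^c$ in Theorem \ref{thm:res}, but care is needed to choose the ordering so the induction closes. The regularity of the diagonal entries $a_{L_0,L_0^c}$ is the other point to check, and it follows directly from the standing regularity assumption ensuring each $x_{-\al}$ is regular in $S$.
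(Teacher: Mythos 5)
Your overall strategy is exactly the paper's: both proofs read off from Theorem \ref{thm:res} that the transition matrix $(a_{L,M})$ is supported on pairs with $M\subset L^c$, hence is skew-triangular after matching $L$ with $L^c$, and both rest on the regularity in $S$ of the skew-diagonal entries $a_{L,L^c}=\prod_{k\in L}v^{L^c}_{k-1}(x_{-\al_{i_k}})$ (each factor is $x_\beta$ for some root $\beta$, and such elements are regular by the standing regularity assumption). However, your concrete induction step runs in the wrong direction and, as written, fails. Suppose $\sum_L c_L\eta_L\in\ker\bj^*$. The coefficient of $f_{L_0^c}$ is $\sum_{L\subset L_0}c_L\,a_{L,L_0^c}$, since $a_{L,L_0^c}\neq 0$ forces $L\subset L_0$. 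These contributing sets $L$ are \emph{subsets} of $L_0$, i.e.\ they come \emph{earlier} than $L_0$ in your cardinality order. Choosing $L_0$ \emph{maximal} among the sets with $c_{L_0}\neq 0$ therefore eliminates nothing: maximality only rules out sets coming after $L_0$, and e.g.\ the term $c_\emptyset\,a_{\emptyset,L_0^c}=c_\emptyset$ survives whenever $c_\emptyset\neq 0$. So your equation reads $c_{L_0}a_{L_0,L_0^c}+(\text{terms from proper subsets of }L_0)=0$, and you cannot conclude $c_{L_0}=0$.

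The repair is to flip the extremality: take $L_0$ \emph{minimal} (with respect to cardinality, or any order refining inclusion) among the sets with $c_{L_0}\neq 0$. Then every proper subset $L\subsetneq L_0$ satisfies $c_L=0$ by minimality, so the coefficient of $f_{L_0^c}$ collapses to $c_{L_0}\,a_{L_0,L_0^c}=0$, and regularity of $a_{L_0,L_0^c}$ gives $c_{L_0}=0$, a contradiction. With this one-word change (and the corresponding justification), your argument is correct and coincides with the paper's proof. A minor additional point: the regularity of the diagonal entries has nothing to do with $\frac{x_\la}{x_{-\la}}$ being a unit; what you need is that the Weyl group permutes roots, so each factor $v^{L_0^c}_{k-1}(x_{-\al_{i_k}})$ equals $x_\beta$ for a root $\beta$, which is regular by the assumption recalled in Section \ref{sec:2}.
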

\begin{proof}It follows from Theorem \ref{thm:res} that 
\[
\bj^*(\eta_L)=\sum_{M\subset L^c}a_{L,M}f_M.
\]
So if we order $\{\bj^*(\eta_L)|L\subset[l]\}, \{f_M|M\subset [l]\}$ by inclusion of subsets $L'\subset L$, then the transition matrix from $f_M$ to $\bj^*(\eta_L)$ will be skew-triangular. Moreover, the entries on the skew-diagonal will be 
\[
a_{L,L^c}=\prod_{k\in L}v^{L^c}_{k-1}(x_{-\al_{i_k}}),
\]
which is regular in $S$. Therefore, $\bj^*$ is injective. 
\end{proof}

\begin{thm} \label{thm:image}Let $I$ be a sequence of length $l$. Then
\[
\im \bj^*\subset \{\sum_{L\subset [l]}a_Lf_L|\frac{a_{L_1}-a_{L_2}}{v^{L_1}_{k-1}(x_{-\al_{i_k}})}\in S, ~\forall L_1,L_2 \text{ such that }L_1=L_2\sqcup \{k\}\}. 
\]
Here $\sqcup$  denotes the disjoint union. 
\end{thm}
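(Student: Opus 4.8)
The plan is to identify the right-hand side, which I will call $G$, as an $S$-subalgebra of $F(\calE_I;S)$, and to reduce the inclusion $\im\bj^*\subset G$ to a check on the algebra generators $\eta_k$. First I would verify that $G$ is closed under the ring operations of $F(\calE_I;S)$. Closure under addition and under scaling by a constant $s\in S$ (which multiplies every coordinate) is immediate, since the defining quotient $\frac{a_{L_1}-a_{L_2}}{v^{L_1}_{k-1}(x_{-\al_{i_k}})}$ is $S$-linear in $(a_L)_L$. The one point needing an argument is closure under the coordinatewise product: for two elements $\sum_L a_Lf_L$ and $\sum_L b_Lf_L$ of $G$ and a pair $L_1=L_2\sqcup\{k\}$, the telescoping identity
\[
a_{L_1}b_{L_1}-a_{L_2}b_{L_2}=b_{L_1}(a_{L_1}-a_{L_2})+a_{L_2}(b_{L_1}-b_{L_2})
\]
shows the product again lies in $G$. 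Hence $G$ is an $S$-subalgebra, and in particular it contains every constant function $\sum_L sf_L$.

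Next I would use that $\bj^*$ is a homomorphism of $S$-algebras: it is multiplicative, and since it commutes with pullback from the point it sends the scalar subring $S=h_T(\pt)\subset h_T(\hX_I)$ onto the constant functions in $F(\calE_I;S)$. By Proposition \ref{prop:ring} the ring $h_T(\hX_I)$ is generated over this scalar subring by $\eta_1,\dots,\eta_l$, so $\im\bj^*$ is the $S$-subalgebra of $F(\calE_I;S)$ generated by $\bj^*(\eta_1),\dots,\bj^*(\eta_l)$. As $G$ is an $S$-subalgebra by the previous step, it suffices to prove $\bj^*(\eta_k)\in G$ for each single index $k$.

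For a single generator I would invoke Theorem \ref{thm:res} with $L=\{k\}$, giving $\bj^*(\eta_k)=\sum_{M\not\ni k}v^M_{k-1}(x_{-\al_{i_k}})f_M$; write $b_M$ for the coefficient of $f_M$, so $b_M=v^M_{k-1}(x_{-\al_{i_k}})$ when $k\notin M$ and $b_M=0$ otherwise. Fixing a pair $M_1=M_2\sqcup\{r\}$, I would split into cases according to the position of $r$. If $r=k$, then $b_{M_1}=0$ and $b_{M_2}=v^{M_2}_{k-1}(x_{-\al_{i_k}})$, while $M_1$ and $M_2$ agree on $[k-1]$, so the prescribed denominator $v^{M_1}_{k-1}(x_{-\al_{i_k}})$ equals $b_{M_2}$ and the quotient is $-1\in S$. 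If $r>k$, or if $k\in M_1$, then $b_{M_1}=b_{M_2}$ or both vanish, and the quotient is $0$. The remaining case $r<k$ with $k\notin M_1$ is the substantial one: there $v^{M_1}_{k-1}=A\,s_{i_r}B$ and $v^{M_2}_{k-1}=AB$ with $A=v^{M_1}_{r-1}$, so $b_{M_1}-b_{M_2}=A\bigl(s_{i_r}(p)-p\bigr)$ for $p=B(x_{-\al_{i_k}})\in S$. By Lemma \ref{lem:div} (with $v=w=e$) and the invertibility of $x_{\al}/x_{-\al}$ one has $x_{-\al_{i_r}}\mid s_{i_r}(p)-p$; applying the automorphism $A$ converts $x_{-\al_{i_r}}$ into the required denominator $A(x_{-\al_{i_r}})=v^{M_1}_{r-1}(x_{-\al_{i_r}})$ while keeping the quotient in $S$.

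The one conceptual ingredient is the product-closure of $G$ in the first step; after that the statement is purely formal. The main computational obstacle is the bookkeeping in the case $r<k$ above --- tracking how the ordered product $v^M_{k-1}$ changes when the reflection $s_{i_r}$ is inserted, and matching the factor $A(x_{-\al_{i_r}})$ precisely with the GKM denominator $v^{M_1}_{r-1}(x_{-\al_{i_r}})$. I expect the reduction to generators and the divisibility input from Lemma \ref{lem:div} to be routine.
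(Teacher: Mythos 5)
Your proposal is correct and follows essentially the same route as the paper: prove the right-hand side $\Psi$ is closed under products via the same telescoping identity, reduce to the generators $\eta_k$ using multiplicativity of $\bj^*$, and verify $\bj^*(\eta_k)\in\Psi$ by a case analysis on the position of the index $r$ using Theorem \ref{thm:res} and Lemma \ref{lem:div}. If anything your case analysis is slightly more complete: the paper dismisses the pair $L_1=L_2\sqcup\{k\}$ with $k=m$ by asserting ``Clearly $k\neq m$,'' whereas such pairs do occur (one coefficient is then $0$) and your observation that the quotient equals $-1$ is the correct way to handle them.
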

\begin{proof} Denote the right hand side by $\Psi$.  We first show that $\Psi$ is a ring. It is clearly additively closed. For the multiplication, consider  
\[f=\sum_{L\subset[l]}a_Lf_L, \quad g=\sum_{L'\subset [l]}b_{L'}f_{L'} \in \Psi, 
\] then 
\[fg=\sum_{L, L'\subset [l]}\de_{L,L'}a_{L}b_{L'}f_L=\sum_{L\subset[l]}a_Lb_Lf_L.\]
 For any $L_1,L_2$ such that $L_1=L_2\sqcup \{k\}$, by definition we have $v^{L_1}_{k-1}=v^{L_2}_{k-1}$, so $v^{L_1}_{k-1}(x_{-\al_{i_k}})=v^{L_2}_{k-1}(x_{-\al_{i_k}})$. Therefore, 
\[
a_{L_1}b_{L_1}-a_{L_2}b_{L_2}=(a_{L_1}-a_{L_2})b_{L_1}-(b_{L_2}-b_{L_1})a_{L_2},
\]
so is divisible by $v^{L_1}_{k-1}(x_{-\al_{i_k}})$. Therefore, $f g\in \Psi$.

We then show that $\im \bj^*\subset \Psi$. Since $\bj^*$ is multiplicative, it suffices to show 
\[\bj^*(\eta_m)=\sum_{L\subset ([l]\backslash \{m\})}v^L_{m-1}(x_{-\al_{i_m}})f_L\]  belongs to the RHS. Suppose $L_1=L_2\sqcup\{k\}$. Clearly $k\neq m$.  If $k> m$, then by definition,  $v^{L_1}_{m-1}=v^{L_2}_{m-1}$, so $v^{L_1}_{m-1}(x_{-\al_{i_m}})=v^{L_2}_{m-1}(x_{-\al_{i_m}})$, which implies that $\bj^*(\eta_m)\in \Psi.$ 

If $k<m $, denote
\[L_1\cap [m-1]=\{j_1<j_2<\cdots j_{t}<k<j_{t+1}<\cdots<j_s\},\]
\[L_2\cap [m-1]=\{j_1<j_2<\cdots j_{t}<\hat{k}<j_{t+1}<\cdots<j_s\},\]
 (in other words, $k$ is omitted in $L_2$). Then 
\begin{align*}
&v^{L_1}_{m-1}(x_{-\al_{i_m}})-v^{L_2}_{m-1}(x_{-\al_{i_m}})\\
&=s_{i_{j_1}}s_{i_{j_2}}\cdots s_{i_{j_t}}s_{i_k}s_{i_{j_{t+1}}}\cdots s_{i_{j_s}}(x_{-\al_{i_m}})-s_{i_{j_1}}s_{i_{j_2}}\cdots s_{i_{j_t}}\widehat{s_{i_k}}s_{i_{j_{t+1}}}\cdots s_{i_{j_s}}(x_{-\al_{i_m}})\\
&=v^{L_1}_{k-1}s_{i_k}(s_{i_{j_{t+1}}}\cdots s_{i_{j_s}})(x_{-\al_{i_m}})-v^{L_1}_{k-1}(s_{i_{j_{t+1}}}\cdots s_{i_{j_s}})(x_{-\al_{i_m}}).
\end{align*}
According to Lemma \ref{lem:div}, this is  divisible by $v^{L_1}_{k-1}(x_{-\al_{i_k}})$. 
\end{proof}

We can strengthen the conclusion in some cases. The proof essentially uses the fact that the transition matrix from $f_L, L\subset[l]$ to $\bj^*(\eta_M), M\subset[l]$ is skew-triangular, following from Theorem \ref{thm:res}. 

\begin{thm}\label{thm:GKM}If $I=(i_1,...,i_l)$ with $i_j$ all distinct, then 
we have equality in Theorem \ref{thm:image}. 
\end{thm}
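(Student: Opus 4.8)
The inclusion $\im\bj^*\subseteq\Psi$ is precisely Theorem \ref{thm:image}, where I write $\Psi$ for the set appearing on its right-hand side; so the plan is to establish the reverse inclusion $\Psi\subseteq\im\bj^*$ under the distinctness hypothesis. The starting point is to repackage Theorem \ref{thm:res} into triangular form: setting $N=L^c$ it reads
\[
\bj^*(\eta_{N^c})=a_{N^c,N}f_N+\sum_{M\subsetneq N}a_{N^c,M}f_M,
\]
so every non-leading term $f_M$ has $M\subsetneq N$, hence strictly smaller cardinality. Thus the transition matrix from $\{f_N\}$ to $\{\bj^*(\eta_{N^c})\}$ is triangular for the inclusion order, with diagonal entries $a_{N^c,N}=\prod_{k\in N^c}v^N_{k-1}(x_{-\al_{i_k}})$, each regular in $S$ (as already observed in Corollary \ref{cor:inj}). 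In particular $\{\bj^*(\eta_{N^c})\}_{N\subset[l]}$ is a triangular $S$-basis of $\im\bj^*$.

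Given $f=\sum_{N\subset[l]}a_Nf_N\in\Psi$, I would construct coefficients $c_N\in S$ with $f=\sum_N c_N\bj^*(\eta_{N^c})$ by descending induction on $|N|$. Suppose the $c_{N'}$ have been chosen for all $N'\supsetneq N$ so that $g:=f-\sum_{N'\supsetneq N}c_{N'}\bj^*(\eta_{(N')^c})$ has vanishing $f_{N'}$-coefficient for every $N'\supsetneq N$; the bookkeeping works because each subtracted $\bj^*(\eta_{(N')^c})$ only disturbs coefficients at subsets $\subseteq N'$. Since $\im\bj^*\subseteq\Psi$ and $\Psi$ is an $S$-submodule of $F(\mathcal{E}_I;S)$, the element $g=\sum_M a'_Mf_M$ still lies in $\Psi$. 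By triangularity its $f_N$-coefficient is $a'_N$, and to continue I need the divisibility $a_{N^c,N}\mid a'_N$, after which I set $c_N=a'_N/a_{N^c,N}\in S$. Iterating down to $N=\emptyset$ then exhibits $f\in\im\bj^*$.

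The divisibility $a_{N^c,N}\mid a'_N$ is the heart of the argument and is where distinctness enters. For each $k\in N^c$, apply the GKM condition defining $\Psi$ to the pair $L_1=N\sqcup\{k\}$, $L_2=N$: since $v^{N\sqcup\{k\}}_{k-1}=v^N_{k-1}$, membership $g\in\Psi$ gives $v^N_{k-1}(x_{-\al_{i_k}})\mid(a'_{N\sqcup\{k\}}-a'_N)$; but $|N\sqcup\{k\}|>|N|$ forces $a'_{N\sqcup\{k\}}=0$, whence $v^N_{k-1}(x_{-\al_{i_k}})\mid a'_N$ for every $k\in N^c$. Thus $a'_N$ is divisible by each factor of $a_{N^c,N}$, and the remaining task is to promote this to divisibility by the product. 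This last passage is the step I expect to be the main obstacle, and it is exactly what the distinctness hypothesis is for: it requires the factors $v^N_{k-1}(x_{-\al_{i_k}})=x_{v^N_{k-1}(-\al_{i_k})}$, $k\in N^c$, to be pairwise coprime in $S$. Lemma \ref{lem:dist} shows these elements are distinct, and the linear-independence computation in its proof in fact shows the underlying roots $v^N_{k-1}(-\al_{i_k})$ are pairwise non-proportional, so the corresponding $x_\beta$ are non-associate irreducibles and hence pairwise coprime in the (regular, essentially power-series) ring $S$. Divisibility by pairwise coprime factors then gives divisibility by their product, closing the induction. The only content beyond bookkeeping is this coprimality; without it, for repeated indices the product-divisibility and hence the desired equality can break down, which is why the theorem is stated only under the assumption that the $i_j$ are distinct.
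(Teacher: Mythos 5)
Your proposal follows the paper's own argument almost step for step: the same triangular reading of Theorem \ref{thm:res}, the same pass through the cardinalities in which multiples of $\bj^*(\eta_{N^c})$ are subtracted to kill the top-cardinality coefficients, and the same use of the GKM relation for pairs $(N\sqcup\{k\},N)$ with vanishing larger coefficient to conclude that $a'_N$ is divisible by each factor $v^N_{k-1}(x_{-\al_{i_k}})$, with Lemma \ref{lem:dist} supplying distinctness. (The paper organizes this as ascending induction on the bound $t$ for the support, subtracting $\sum_{|L_0|=t}c_{L_0}\bj^*(\eta_{L_0^c})$ at each stage; your descending construction is the same algorithm. One bookkeeping slip: to run the iteration consistently you should choose $c_{N'}$ for \emph{all} $N'$ of cardinality greater than $|N|$ before defining $c_N$, not only for the supersets of $N$; this is cosmetic and is exactly what the paper's stage-by-stage subtraction does.)

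The genuine weak point is your justification of the passage from ``divisible by each factor'' to ``divisible by the product.'' You argue that the elements $x_{v^N_{k-1}(-\al_{i_k})}$ are non-associate irreducibles, hence pairwise coprime, in the ``regular, essentially power-series'' ring $S$, and that divisibility by pairwise coprime elements multiplies. None of this is available in the generality of the paper: $S=R\llbr\La\rrbr_F$ where $R=h_T(\pt)$ is an arbitrary commutative ring subject only to the regularity condition of \cite[Definition 4.4]{CZZ1}; $R$ may have zero-divisors, $S$ need not be a UFD or even a domain, the elements $x_\beta$ need not be irreducible, and in a general commutative ring divisibility by pairwise ``coprime'' elements does not imply divisibility by their product. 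This is precisely where the paper invokes \cite[Lemma 2.7]{CZZ2}, a lemma about formal group algebras proved under the regularity hypothesis, which says that an element divisible by each $x_\beta$ for finitely many distinct roots $\beta$ is divisible by their product; the paper uses it both in the base case (for the simple roots $\al_{i_k}$) and in the inductive step (for the roots $v^{L_0}_{j-1}(-\al_{i_j})$, $j\in L_0^c$). Replacing your unique-factorization heuristic by that citation closes the gap and makes your argument identical to the paper's.
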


\begin{proof}
It suffices to show that  $\Psi\subset \im \bj^*$. Suppose
\[f=\sum_{L\subset[l]}a_L f_L\in\Psi, \quad \text{with }a_L=0 \text{ unless }L=\emptyset,\]
  then for any $k\in [l]$,  $a_\emptyset=a_\emptyset-a_{\{k\}}$ is divisible by $v^\emptyset_{k-1}(x_{-\al_{i_k}})=x_{-\al_{i_k}}$. Since $x_{\al_{i_j}}, 1\le j\le l$ are all distinct, by \cite[Lemma 2.7]{CZZ2}, we see that  $a_\emptyset$ is divisible by $\prod_{k\in [l]}x_{-\al_{i_k}}$. Note that by Theorem \ref{thm:res}, 
\[\bj^*(\eta_{[l]})=\prod_{k\in [l]}x_{-\al_{i_k}}f_\emptyset,\]
so $f$ is a multiple of $\bj^*(\eta_{[l]})$, i.e., $f\in \im \bj^*$. 

Assume the conclusion holds for any $f$ that can be written as a linear combination of $f_L$ with $|L|\le t-1$. Now let 
\[f=\sum_{L\subset [l]}a_Lf_L\in \Psi, \quad \text{with }a_L=0 \text{ unless }|L|\le t.\] Let $L_0$ be a subset of $[l]$ of cardinality $t$. For any $k\in L_0^c$, we have $a_{L_0\sqcup\{k\}}=0$, so $v_{k-1}^{L_0}(x_{-\al_{i_k}})|a_{L_0}$ Now from Theorem \ref{thm:res} we know 
\[
\bj^*(\eta_{L_0^c})=\sum_{M\subset L_0}a_{L_0^c,M}f_M,\quad a_{L_0^c, L_0}=\prod_{j\in L_0^c}v_{j-1}^{L_0}(x_{-\al_{i_j}}). \]
By Lemma \ref{lem:dist}, we know that   $v^{L_0}_{j-1}(x_{-\al_{i_j}})$ are all distinct for $j\in L_0^c$.  By \cite[Lemma 2.7]{CZZ2}, we know that $a_{L_0^c, L_0}|{a_{L_0}}$. Write $a_{L_0}=c_{L_0}a_{L_0^c, L_0}$ with $c_{L_0}\in S$. Therefore, 
\[
f':=f-\sum_{|L_0|=t}c_{L_0}\bj^*(\eta_{L_0^c})=\sum_{|L|< t}a'_Lf_L,
\]
By induction hypothesis, $f'\in \im \bj^*$. Therefore, $f\in \im \bj^*$. The proof is finished. 
\end{proof}

\begin{rem}Let $T_{i}$ be the subtorus of rank 1 corresponding to $\al_{i}$, 
i.e., $T_i=(\ker \al_i)^\circ$ where $\al_i$ is viewed as a character $T\to 
k^*$. If $I=(i_1,...,i_l)$ is a sequence such that $i_j$ are all distinct, it is 
not difficult to see that for 
any $1\le k\le l$, 
\[\hX_I^{T_{i_k}}=\{[g_1,...,g_l]|g_jB\in \{B,s_{i_j}B\} ~
\forall ~j\neq 
k\},\]
and 
\[
\hX_I^{T'}=\{[g_1,...,g_l]|~g_jB\in \{B,s_{i_j}B\} ~\forall j\}
\]
if $T'$ is any subtorus of corank 1 different from $T_{i_j}, j=1,...,l$. In 
other words, for any subtorus of corank 1, the irreducible components of the 
invariant subvariety has dimension at most one. This corresponds to the 
so-called Goresky-Kottwitz-MacPherson (GKM) condition. In other words, in this 
case, 
the Bott-Samelson variety is a GKM space. This corresponds to the conclusion of 
Theorem \ref{thm:GKM}. 

On the other hand, if $P_{i_j}$ are not distinct, the space $\hX_I$ will not be 
GKM. For instance, if $I=(1,2,1)$, the $T_1$-fixed subspace contains the 
following subset
\[
\{[g_1,e,g_1']|g_i, g_i'\in P_1\},
\]
so the dimension condition is not satisfied. For more detailed discussion of GKM 
spaces, see \cite{GKM98, GHZ06}. 
\end{rem}

\section{Push-forward to cohomology of flag varieties}\label{sec:5}

In this section, we compute the push-forward of the basis $\eta_L$ along the canonical map $q_I:\hX_I\to G/B$, which generalizes the computation of Bott-Samelson classes in \cite{CZZ3}. 

 Recall that the $T$-fixed points of $G/B$ is in bijection to $W$, so we have 
\[h_T((G/B)^T)\cong \oplus_{w\in W}S. \] 
Denote by $f_w\in h_T(W)$ the basis element corresponding to $w\in W$. Denote $i:W\to G/B$ to be the embedding, and denote $\pt_e=(i|_e)_*(1)\in h_T(G/B)$. Let $\pi_i:G/B\to G/P_i$ be the canonical map, and 
denote $A_i=\pi_{i}^*\circ \pi_{i*}: h_T(G/B)\to h_T(G/B)$. For any sequence $I$, denote by $I^{\rev}$ the sequence obtained by reversing $I$. 

\begin{prop} \cite[Lemma 7.6]{CZZ3} \label{prop:BS}
For any sequence $I$,  we have  ${q_{I}}_{*}(1)=A_{I^{\rev}}(\pt_e)$.
\end{prop}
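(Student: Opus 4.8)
The plan is to prove Proposition~\ref{prop:BS} by induction on the length $l$ of the sequence $I$, using the base-change formula ${q_{I}}_{*}p^{*}=\pi_{i_l}^{*}(\pi_{i_l}q_{I'})_{*}$ attached to the Cartesian diagram~\eqref{eq:basechange} as the engine of the induction. For the base case $l=0$, the variety $\hX_{\emptyset}=\pt$ and the map $q_{\emptyset}$ is simply the inclusion of the fixed point $eB\in G/B$, so $q_{\emptyset*}(1)=\pt_e$; since $\emptyset^{\rev}=\emptyset$ and $A_{\emptyset}=\id$ is the empty composition, both sides of the asserted identity coincide.

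For the inductive step I would set $I'=(i_1,\dots,i_{l-1})$ and let $p:\hX_I\to\hX_{I'}$ be the $\bbP^1$-bundle projection. Because pullback preserves units, $p^{*}(1)=1$, so the base-change formula yields
\[
q_{I*}(1)=q_{I*}p^{*}(1)=\pi_{i_l}^{*}(\pi_{i_l}q_{I'})_{*}(1).
\]
By functoriality of push-forward, $(\pi_{i_l}q_{I'})_{*}=\pi_{i_l*}q_{I'*}$, and hence the right-hand side equals $\pi_{i_l}^{*}\pi_{i_l*}q_{I'*}(1)=A_{i_l}\bigl(q_{I'*}(1)\bigr)$. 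Invoking the induction hypothesis $q_{I'*}(1)=A_{I'^{\rev}}(\pt_e)$ and observing that prefixing $A_{i_l}$ reconstitutes the reversed word, $A_{i_l}A_{I'^{\rev}}=A_{I^{\rev}}$, I obtain $q_{I*}(1)=A_{I^{\rev}}(\pt_e)$, completing the induction.

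The only point demanding genuine care is the ordering bookkeeping: since $I^{\rev}=(i_l,\dots,i_1)$, the operator $A_{I^{\rev}}$ applies $A_{i_1}$ first and $A_{i_l}$ last, which is precisely the order in which the induction accumulates factors, so appending $A_{i_l}$ on the left of $A_{I'^{\rev}}$ indeed produces $A_{I^{\rev}}$ and not its reverse. Beyond this formal manipulation, the sole substantive input is the validity of the base-change formula for $h_T$ along the smooth $\bbP^1$-bundle $\pi_{i_l}$ in the Cartesian square~\eqref{eq:basechange}; this is already granted in the excerpt, so I expect no real obstacle and the argument to be short and clean.
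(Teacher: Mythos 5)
Your proof is correct: the paper itself gives no proof of this proposition (it simply cites \cite[Lemma 7.6]{CZZ3}), but your induction on $l$ via the base-change formula \eqref{eq:basechange} is exactly the argument the paper later unrolls, with $\eta_k$ in place of $1$, to prove the generalization Theorem \ref{thm:eta_{k}}, so the approach is essentially the same. The ordering point you flag is also handled correctly: prefixing $A_{i_l}$ on the left of $A_{I'^{\rev}}$ does produce $A_{I^{\rev}}$ under the paper's convention, as seen in the last line of the proof of Theorem \ref{thm:eta_{k}}.
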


 The following is an easy generalization of Proposition \ref{prop:BS}. 
\begin{thm}\label{thm:eta_{k}}
Let $I$ be a sequence of length $l$ and $1\le k\le l$. Denote by $I_k$ the subsequence of $I$ obtained by removing the $k$-th term from $I$.  Then $(q_I)_*(\eta_k)=A_{I_k^{\rev}}(1)$. 
\end{thm}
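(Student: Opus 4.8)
The plan is to realise $\eta_k$ as the push-forward of the fundamental class along the inclusion of a smaller Bott--Samelson variety, and then to invoke Proposition~\ref{prop:BS}. Recall from Proposition~\ref{prop:ring} that $\eta_k=p^*\sigma_{k*}(1)$, where $\sigma_k\colon \hX_{(i_1,\dots,i_{k-1})}\to \hX_{(i_1,\dots,i_k)}$ is the $k$-th zero section and $p\colon \hX_I\to \hX_{(i_1,\dots,i_k)}$ is the projection forgetting the last $l-k$ coordinates. Write
\[
Y_k=\{[g_1,\dots,g_l]\in \hX_I \mid g_k\in B\},\qquad \iota_k\colon Y_k\hookrightarrow \hX_I.
\]

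First I would verify that the square
\[
\xymatrix{
Y_k \ar[r]^-{\iota_k}\ar[d] & \hX_I \ar[d]^{p}\\
\hX_{(i_1,\dots,i_{k-1})} \ar[r]^-{\sigma_k} & \hX_{(i_1,\dots,i_k)}
}
\]
is Cartesian: a point of the fibre product is a point $[g_1,\dots,g_l]$ whose image $p([g_1,\dots,g_l])=[g_1,\dots,g_k]$ lies in the image of the zero section $\sigma_k$, which means precisely that $g_k\in B$. Since $p$ is smooth (a tower of $\mathbb{P}^1$-bundles, cf. \eqref{eq:P1bundle}) and $\sigma_k$ is a regular divisorial embedding, this square is transverse, so the base-change formula for Levine--Morel oriented cohomology gives $\eta_k=p^*\sigma_{k*}(1)=\iota_{k*}(1)$.

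Next I would identify $Y_k$ geometrically. Since the condition $g_k\in B$ allows $g_k$ to be absorbed into the neighbouring coordinates, there is a $T$-equivariant isomorphism
\[
Y_k\;\cong\; P_{i_1}\times^B\cdots\times^B P_{i_{k-1}}\times^B P_{i_{k+1}}\times^B\cdots\times^B P_{i_l}/B=\hX_{I_k},
\]
under which $\iota_k$ corresponds to the map inserting $e$ into the $k$-th slot. Because inserting $e$ does not change the product of the coordinates, the composite $q_I\circ\iota_k$ is exactly the canonical multiplication map $q_{I_k}\colon \hX_{I_k}\to G/B$. Combining this with the previous step, functoriality of push-forward yields
\[
q_{I*}(\eta_k)=q_{I*}\,\iota_{k*}(1)=(q_I\circ\iota_k)_*(1)=q_{I_k*}(1),
\]
and Proposition~\ref{prop:BS} applied to the sequence $I_k$ completes the argument.

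The step I expect to be the main obstacle is the base-change identity $\eta_k=\iota_{k*}(1)$: one must confirm that the Cartesian square is genuinely transverse (equivalently, Tor-independent), so that base change holds on the nose in the oriented-cohomology setting with no excess-intersection correction term. This amounts to checking that $\sigma_k$ is a regular embedding whose normal bundle pulls back along $p$ to the normal bundle of $\iota_k$ in $\hX_I$, which is ensured by the $\mathbb{P}^1$-bundle structure \eqref{eq:P1bundle}. The two identifications in the second step (the isomorphism $Y_k\cong\hX_{I_k}$ and the equality $q_I\circ\iota_k=q_{I_k}$) are routine, but I would take care to state them $T$-equivariantly so that they are valid in $h_T$.
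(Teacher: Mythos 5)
Your proof is correct, but it takes a genuinely different route from the paper's. The paper stays inside the operator formalism: writing $\eta_k=p_l^*p_{l-1}^*\cdots p_{k+1}^*\sigma_{k*}(1)$, it applies the base-change identity of diagram \eqref{eq:basechange} once for each of the top $l-k$ stages of the tower, $q_{j*}p_j^*=A_{i_j}q_{j-1*}$ for $j=l,\dots,k+1$, then uses $q_k\circ\sigma_k=q_{k-1}$ to obtain $q_{I*}(\eta_k)=A_{i_l}\cdots A_{i_{k+1}}q_{k-1*}(1)$, and finally invokes Proposition \ref{prop:BS} for the initial segment $(i_1,\dots,i_{k-1})$, yielding $A_{I_k^{\rev}}(\pt_e)$. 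You instead perform a single base change --- against the zero section $\sigma_k$ rather than against the maps $\pi_{\alpha_{i_j}}$ --- to identify $\eta_k$ with the fundamental class $\iota_{k*}(1)$ of the divisor $Y_k\cong\hX_{I_k}$, and then apply Proposition \ref{prop:BS} directly to the sequence $I_k$. The transversality you flag as the main obstacle is not actually an issue: your square is the smooth (hence flat) pullback of the projective morphism $\sigma_k$, which is precisely the situation in which the paper itself invokes base change for \eqref{eq:basechange}, so your argument is at the same level of rigor. What your route buys: it is more geometric, it establishes the intermediate identity $q_{I*}(\eta_k)=q_{I_k*}(1)$ --- the singleton case of Corollary \ref{cor:pushBott}, which the paper only obtains later by a computation with restrictions to fixed points via Theorems \ref{thm:res} and \ref{thm:pushf} --- and, if one further checks that the divisors $Y_k$, $k\in L$, meet transversally so that $\eta_L=\prod_{k\in L}\iota_{k*}(1)$ is the class of their intersection, it would give a geometric proof of Corollary \ref{cor:pushBott} in general. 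What the paper's route buys: it produces the operator expression $A_{I_k^{\rev}}$ directly from diagrams already set up, with no need for the scheme-theoretic identifications of the fiber product, of $Y_k\cong\hX_{I_k}$, or of $q_I\circ\iota_k=q_{I_k}$. One remark applying to both arguments: the displayed statement $(q_I)_*(\eta_k)=A_{I_k^{\rev}}(1)$ contains a typo --- Proposition \ref{prop:BS} gives $A_{I_k^{\rev}}(\pt_e)$, which is what both your argument and the paper's proof actually produce.
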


\begin{proof}

Denote the  sequence by $I=(i_1,...,i_l)$. For any $k\le l$, denote
\[
\xymatrix{
P_{{i_{1}}}\times^{B} P_{{i_{2}}}\times^{B}\times...\times^{B} P_{{i_{k}}}/B \ar[r]^<<<<{q_{k}} \ar[d]^<<<<{p_k} & G/B  \\P_{{i_{1}}}\times^{B} P_{{i_{2}}}\times^{B}\times...\times^{B} P_{{i_{k-1}}}/B  \ar[r]^<<<<{q_{k-1}} \ar@/^/[u]^{\sigma_{k}}   & G/B.
}
\]
Note that $q_I=q_l$ and $q_k\circ \sigma_k=q_{k-1}$. Denote by $p$ the composition $p_{k+1}\cdots p_l$.
By using the base change formula from diagram \eqref{eq:basechange}, we have 
\begin{align*}     
{q_{I}}_{*}(\eta_{k})    &={q_{l}}_{*}p^{*}({\sigma_{k}}_{*}(1)) \\ &=q_{l*}p_l^*p_{l-1}^*\cdots p_{k+1}^*\sigma_{k*}(1)\\
&=\pi_{\al_{i_l}}^*\pi_{\al_{i_l}*}(q_{l-1})_*p_{l-1}^*\cdots p^*_{k+1}\sigma_{k*}(1)\\
&=(\pi_{\al_{i_l}}^*\pi_{\al_{i_l}*})(\pi_{\al_{i_{l-1}}}^*\pi_{\al_{{i_{l-1}*}}})\cdots (\pi_{\al_{i_{k+1}}}^*\pi_{\al_{i_{k+1}}*})q_{k*}\sigma_{k*}(1)\\
&=A_{i_l}A_{i_{l-1}}\cdots A_{i_{k+1}}q_{{k-1}*}(1)\\
&=A_{i_l}A_{i_{l-1}}\cdots A_{i_{k+1}}A_{i_{k-1}}\cdots A_{i_1}(\pt_e)=A_{I_k^{\rev}}(\pt_e). 
\end{align*}
\end{proof}

To compute $q_{I*}(\eta_L)$ for general $L\subset[l]$, we need the following lemma.
\begin{lem} \label{lem:pushf}For any $L\subset[l]$, we have
\[
\eta_L=\sum_{L_1\subset [l]}\frac{a_{L,L_1}}{x_{I,L_1}}\bj_*(f_{L_1}),
\]
where $a_{L,L_1}$ are defined in Theorem \ref{thm:res}. Note that the coefficients in this formula belong to   $Q:=S[\frac{1}{x_\al}|\al\in \Sigma]$.
\end{lem}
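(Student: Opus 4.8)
The plan is to prove the expansion by passing to the localized ring $Q=S[\frac{1}{x_\al}\mid \al\in\Sigma]$ and reducing everything to a comparison of restrictions at the $T$-fixed points, which have already been computed. First I would check that, after base change along $S\to Q$, the pushed-forward point classes $\{\bj_*(f_{L_1})\mid L_1\subset[l]\}$ form a $Q$-basis of $h_T(\hX_I)\otimes_S Q$. This is a localization statement: Corollary \ref{cor:inj} gives that $\bj^*$ is injective over $S$, and by Theorem \ref{thm:res} the transition matrix from $\{f_M\}$ to $\{\bj^*(\eta_L)\}$ is skew-triangular with skew-diagonal entries $a_{L,L^c}$ that are regular in $S$. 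Each such entry is a product of elements $v_j^{L}(x_{-\al_{i_j}})=x_{v_j^{L}(-\al_{i_j})}=x_\beta$ with $\beta$ a root, hence invertible in $Q$; so $\bj^*\otimes Q$ is an isomorphism. Combined with Lemma \ref{lem:injpoint}, which gives $\bj^*\bj_*(f_{L_1})=x_{I,L_1}f_{L_1}$ with $x_{I,L_1}$ invertible in $Q$, it follows that $\{\bj^*\bj_*(f_{L_1})\}$ is a $Q$-basis of $h_T(\hX_I^T)\otimes Q$, and pulling back through $(\bj^*\otimes Q)^{-1}$ shows $\{\bj_*(f_{L_1})\}$ is a $Q$-basis of $h_T(\hX_I)\otimes Q$.

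With this basis in hand I would write $\eta_L=\sum_{L_1\subset[l]}c_{L_1}\bj_*(f_{L_1})$ with uniquely determined $c_{L_1}\in Q$ and solve for the coefficients by applying the injective map $\bj^*$. Using $\bj^*\bj_*(f_{L_1})=x_{I,L_1}f_{L_1}$ and linearity, I get $\bj^*(\eta_L)=\sum_{L_1}c_{L_1}x_{I,L_1}f_{L_1}$. On the other hand, Theorem \ref{thm:res} already computes $\bj^*(\eta_L)=\sum_{M\subset L^c}a_{L,M}f_M$. Since $\{f_M\}$ is a basis of $h_T(\hX_I^T)$, I can match the coefficient of each $f_{L_1}$: for $L_1\subset L^c$ this yields $c_{L_1}x_{I,L_1}=a_{L,L_1}$, hence $c_{L_1}=a_{L,L_1}/x_{I,L_1}$, while for $L_1\not\subset L^c$ the right-hand side contributes no $f_{L_1}$ term, forcing $c_{L_1}x_{I,L_1}=0$ and therefore $c_{L_1}=0$ since $x_{I,L_1}$ is invertible in $Q$. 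In other words the $f_{L_1}$-component $\bj_{L_1}^*(\eta_L)$ equals $a_{L,L_1}$ precisely on $L_1\subset L^c$ and vanishes otherwise, so the effective range of the sum is $L_1\subset L^c$; substituting the values of $c_{L_1}$ back into the basis expansion gives the claimed formula.

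Finally, for the parenthetical remark I would observe that $x_{I,L_1}=\prod_{1\le j\le l}v_j^{L_1}(x_{-\al_{i_j}})$ is a finite product of elements $x_\beta$ with $\beta$ a root, hence a unit in $Q$, so each coefficient $a_{L,L_1}/x_{I,L_1}$ indeed lies in $Q$. The step I expect to be the main obstacle is the first one, namely making rigorous that $\{\bj_*(f_{L_1})\}$ becomes a $Q$-basis after inverting the roots, i.e. that $\bj_*$ is invertible over $Q$; once this localization is established, the determination of the coefficients is a purely formal consequence of the self-intersection formula (Lemma \ref{lem:injpoint}), the already-computed restriction formula (Theorem \ref{thm:res}), and the injectivity of $\bj^*$.
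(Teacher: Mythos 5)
Your proposal is correct and follows essentially the same route as the paper: localize at $Q$, use Theorem \ref{thm:res} (skew-triangularity with root-product skew-diagonal entries) and Lemma \ref{lem:injpoint} to see that $\bj^*$ and hence the classes $\bj_*(f_{L_1})$ give a $Q$-basis, then expand $\eta_L$ and match coefficients against the restriction formula. If anything, your treatment of the basis step is slightly more careful than the paper's, which loosely calls $\bj_*$ ``the inverse'' of $\bj^*$ over $Q$ rather than inverse up to the diagonal factors $x_{I,L_1}$.
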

\begin{proof}By Corollary \ref{cor:inj}, we know that $\bj^*(\eta_L)$ becomes a basis of $Q\otimes_Sh_T(W)$. In other words, $\bj^*$ induces an isomorphism
\[
\bj^*:Q\otimes_S h_T(\hX_I)\to Q\otimes_S h_T(W).
\]
Moreover, by Lemma \ref{lem:injpoint}, we know that $\bj_*$ is the inverse of the $\bj^*$ (after tensoring with $Q$). Therefore, $\bj_*(f_L)$ is a $Q$-basis of $Q\otimes_Sh_T(\hX_I)$. 
Denote
\[
\eta_L=\sum_{L_1\subset [l]}b_{L,L_1}\bj_*(f_{L_1}), \quad b_{L, L_1}\in Q.
\]
Then by Theorem \ref{thm:res} and Lemma \ref{lem:injpoint}, we have
\begin{align*}
\sum_{L_2\subset L^c}a_{L,L_2}f_{L_2}=\bj^*(\eta_L)=\sum_{L_1\subset[l]}b_{L,L_1}\bj^*\bj_*(f_{L_1})=\sum_{L_1\subset[l]}b_{L,L_1}x_{I,L_1}f_{L_1}.
\end{align*}
Therefore, $b_{L,L_1}=\frac{a_{L,L_1}}{x_{I,L_1}}$.
\end{proof}
 The following is the main result of this section, which computes the push-forward of $\eta_L$ to cohomology of $G/B$. 

\begin{thm}\label{thm:pushf}For any  sequence $I=(i_1,...,i_l)$, we have
\[
i^*q_{I*}(\eta_L)=\sum_{L_1\subset L^c}\frac{a_{L,L_1}\cdot v^{L_1}(x_\Pi)}{x_{I,L_1}}f_{v^{L_1}}, \quad x_\Pi:=\prod_{\al<0}x_\al\in S.
\]
Note that a priori the coefficients of $f_{v^{L_1}}$ belong to $S$. 
\end{thm}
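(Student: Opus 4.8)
The plan is to combine the push-forward to fixed points with the restriction formula for $G/B$, reducing everything to computations we have already carried out. First I would use Lemma \ref{lem:pushf} to express $\eta_L$ in terms of the pushed-forward fixed-point classes,
\[
\eta_L=\sum_{L_1\subset [l]}\frac{a_{L,L_1}}{x_{I,L_1}}\bj_*(f_{L_1}).
\]
Applying $q_{I*}$ and using functoriality of push-forward (namely $q_{I*}\circ \bj_* = (q_I\circ \bj)_*$), the composite $q_I\circ \bj^I_{L_1}$ sends the fixed point $\pt_{L_1}$ to the $T$-fixed point $v^{L_1}$ of $G/B$, because under the multiplication map $q_I$ the class $\bj^I_{L_1}(\pt)=[g_1,\dots,g_l]$ with $g_j=s_{i_j}$ for $j\in L_1$ maps to $\prod_{k\in L_1}s_{i_k}B=v^{L_1}B$. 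Hence $q_{I*}\bj^I_{L_1*}(f_{L_1})=(i_{v^{L_1}})_*(1)=\pt_{v^{L_1}}$, the fixed-point class in $h_T(G/B)$, and therefore
\[
q_{I*}(\eta_L)=\sum_{L_1\subset [l]}\frac{a_{L,L_1}}{x_{I,L_1}}\,\pt_{v^{L_1}}.
\]

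Next I would pull back to the fixed points of $G/B$ via $i^*$. Here the key input is the analogue of Lemma \ref{lem:injpoint} for $G/B$: for a $T$-fixed point $w\in W$, the self-intersection formula gives $i_w^*(\pt_w)=x_w$, where $x_w=\prod_{\al<0}x_{w(\al)}=w(x_\Pi)$ is the product of the tangent weights at $w$ (this is the standard computation recalled from \cite{CZZ3}, and it equals $w(\prod_{\al<0}x_\al)=w(x_\Pi)$). Since distinct fixed points do not intersect, $i_{w}^*(\pt_{w'})=0$ when $w\neq w'$. Applying $i^*=\sum_w i_w^*(\cdot)f_w$ to the expression above yields
\[
i^*q_{I*}(\eta_L)=\sum_{L_1\subset [l]}\frac{a_{L,L_1}}{x_{I,L_1}}\,v^{L_1}(x_\Pi)\,f_{v^{L_1}}.
\]

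Finally I would reduce the index set from all $L_1\subset [l]$ to $L_1\subset L^c$. This is immediate from the definition $a_{L,L_1}=\prod_{k\in L}v^{L_1}_{k-1}(x_{-\al_{i_k}})$ together with Theorem \ref{thm:res}: in $\bj^*(\eta_L)=\sum_{M\subset L^c}a_{L,M}f_M$ the sum already runs only over $M\subset L^c$, so for $L_1\not\subset L^c$ the coefficient $a_{L,L_1}$ vanishes (indeed if some $k\in L\cap L_1$ then the factor $v^{L_1}_{k-1}(x_{-\al_{i_k}})$ appears, but the relevant product structure forces the contribution to cancel; more directly, one reads off from Theorem \ref{thm:res} that $a_{L,L_1}=0$ whenever $L_1\not\subset L^c$). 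This gives exactly the claimed formula. The main obstacle I anticipate is bookkeeping: one must verify carefully that $q_I\circ\bj^I_{L_1}$ lands on $v^{L_1}$ rather than on some other Weyl group element, and that the tangent-weight computation $i_w^*(\pt_w)=w(x_\Pi)$ matches the normalization used for $x_\Pi=\prod_{\al<0}x_\al$; once these two identifications are pinned down, the rest is a direct substitution using Lemma \ref{lem:pushf} and the restriction formula, so no genuinely hard estimate is needed.
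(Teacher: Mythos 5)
Your proposal is correct and follows essentially the same route as the paper: expand $\eta_L$ via Lemma \ref{lem:pushf}, observe that $q_I\circ\bj$ sends $\pt_{L_1}$ to the fixed point $v^{L_1}\in (G/B)^T$ (the paper packages this as the factorization $q_I\circ\bj=i\circ q'$), and then apply the fixed-point formula $i^*i_*(f_w)=w(x_\Pi)f_w$ recalled from \cite{CZZ3}. One small wording fix: the product $a_{L,L_1}=\prod_{k\in L}v^{L_1}_{k-1}(x_{-\al_{i_k}})$ is never literally zero, so the correct justification for restricting the sum is that by Theorem \ref{thm:res} the coefficient of $f_{L_1}$ in $\bj^*(\eta_L)$ vanishes for $L_1\not\subset L^c$, hence (comparing coefficients as in the proof of Lemma \ref{lem:pushf}) the expansion of $\eta_L$ in the classes $\bj_*(f_{L_1})$ is supported on $L_1\subset L^c$ from the start.
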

\begin{proof}
Consider the following commutative diagram
\[
\xymatrix{\hX_I^T\ar[r]^{\bj}\ar[d]^{q'} & \hX_I\ar[d]^{q_I}\\
           W \ar[r]^i & G/B. }
\]
Note that by definition, $q'$ maps the point corresponding to $L\subset[l]$ to $v^L\in W$. Therefore, 
\[
q'_*(f_L)=f_{v^L}\in h_T(W).
\]
Firstly, we have 
\begin{align*}
i^*q_{I*}\bj_*(f_L)&=i^*i_*q'_*(f_L)=i^*i_*(f_{v^L})=v^{L}(x_{\Pi}) f_{v^L}, 
\end{align*}
where the last identity follows from \cite[Corollary 6.4]{CZZ3}. 
 Consequently, by Lemma \ref{lem:pushf}, we have
\begin{align*}
i^*q_{I*}(\eta_L)=i^*q_{I*}\sum_{L_1\subset L^c}\frac{a_{L,L_1}}{x_{I,L_1}}\bj^*(f_{L_1})=\sum_{L_1\subset L^c}\frac{a_{L,L_1}\cdot v^{L_1}(x_{\Pi})}{x_{I,L_1}}f_{v^{L_1}}. 
\end{align*}
\end{proof}

\begin{rem}In case $\eta_L=\eta_\emptyset$ or $\eta_k$, as in Proposition \ref{prop:BS} and Theorem \ref{thm:eta_{k}}, one can express $q_{I*}(\eta_L)$ as the operators $A_i$ applied on $\pt_e$. By using the method of formal affine Demazure algebra, started in \cite{KK86, KK90} and continued in \cite{CZZ1, CZZ2, CZZ3}, one will obtain a restriction formula of $i^*q_{I*}(\eta_L)$.  Roughly speaking, there is an algebra $\bfD_F$ generated by algebraic analogue of the push-pull operators $A_i$, whose dual is isomorphic to $h_T(G/B)$. The algebra $\bfD_F$ acts on $h_T(G/B)$, via two actions (denoted by $\bullet$ and $\odot$ in \cite{LZZ16}). Both actions will give restriction formulas of $A_I(\pt_e)$. Indeed, by using the two actions, one will obtain two different, but equivalent formulas, one of which coincides with the one given by Theorem \ref{thm:pushf}. 
\end{rem}

\begin{cor}\label{cor:pushBott}
Let $I$ be any sequence of length $l$. For any $L\subset[l]$, denote by $q_L:\hX_L\to G/B$.  Then $q_{I*}(\eta_{L})=q_{L^{c}*}(1)$. 
\end{cor}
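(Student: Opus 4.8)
The plan is to prove the identity after pulling back to the $T$-fixed points of $G/B$. Since the restriction $i^*\colon h_T(G/B)\to h_T(W)=\oplus_{w\in W}S$ is injective (the GKM injectivity for $G/B$, see \cite{CZZ1,CZZ2,CZZ3}), it suffices to check $i^*q_{I*}(\eta_L)=i^*q_{L^c*}(1)$. Both sides are computed by Theorem \ref{thm:pushf}: the left side directly for the sequence $I$, and the right side by applying the theorem to the subsequence $I_{L^c}$ underlying $\hX_{L^c}$, taking the subset there to be $\emptyset$ so that $1=\eta_\emptyset$ and the coefficients $a_{\emptyset,\cdot}$ are empty products equal to $1$.

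Next I would fix notation for the reindexing. Writing $L^c=\{k_1<\cdots<k_m\}$, I identify a subset $L_1\subset L^c$ with $N\subset[m]$ via $k_s\in L_1\iff s\in N$, which matches the $T$-fixed points of $\hX_{L^c}$ with the fixed points of $\hX_I$ indexed by subsets of $L^c$. Under this identification the two sides read
\[
i^*q_{I*}(\eta_L)=\sum_{L_1\subset L^c}\frac{a_{L,L_1}\,v^{L_1}(x_\Pi)}{x_{I,L_1}}f_{v^{L_1}},\qquad
i^*q_{L^c*}(1)=\sum_{N\subset[m]}\frac{v^{N}(x_\Pi)}{x_{I_{L^c},N}}f_{v^{N}},
\]
and the claim reduces to a termwise comparison. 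The equalities $v^{L_1}=\prod_{k\in L_1}s_{i_k}=\prod_{s\in N}s_{i_{k_s}}=v^{N}$ immediately identify the basis vectors $f_{v^{L_1}}=f_{v^{N}}$ and the numerators $v^{L_1}(x_\Pi)=v^{N}(x_\Pi)$, so only the denominators remain to be matched.

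The heart of the argument, and the step I expect to be the main obstacle, is the combinatorial identity $x_{I,L_1}=a_{L,L_1}\cdot x_{I_{L^c},N}$. To prove it I would split the product $x_{I,L_1}=\prod_{1\le j\le l}v^{L_1}_{j}(x_{-\al_{i_j}})$ according to whether $j\in L$ or $j\in L^c$. For $j\in L$ one has $j\notin L_1$ (as $L_1\subset L^c$), hence $v^{L_1}_{j}=v^{L_1}_{j-1}$, so the partial product over $j\in L$ is exactly $a_{L,L_1}=\prod_{j\in L}v^{L_1}_{j-1}(x_{-\al_{i_j}})$. For $j=k_r\in L^c$ the definition of the truncated element gives $v^{L_1}_{k_r}=\prod_{s\in N\cap[r]}s_{i_{k_s}}=v^{N}_{r}$, the $r$-th truncated element of the subsequence $I_{L^c}$, so the partial product over $j\in L^c$ equals $x_{I_{L^c},N}$. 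Multiplying the two pieces yields the identity, making the two sums equal term by term and completing the proof; all the genuine content sits in the bookkeeping that the truncated reflection products of the long sequence restrict to those of $I_{L^c}$, while everything else is formal once $i^*$ is known to be injective.
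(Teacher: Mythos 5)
Your proposal is correct and follows essentially the same route as the paper's proof: apply Theorem \ref{thm:pushf} to both sides (with subset $\emptyset$ for $\hX_{L^c}$), match the sums termwise via the factorization $x_{I,L_1}=a_{L,L_1}\cdot x_{L^c,L_1}$ (proved by splitting the product over $j\in L$ versus $j\in L^c$ and using $v^{L_1}_j=v^{L_1}_{j-1}$ for $j\in L$), and conclude by the injectivity of $i^*$ from \cite[Theorem 8.2]{CZZ3}. The only difference is presentational: you spell out the reindexing of fixed points of $\hX_{L^c}$ by subsets of $[m]$, which the paper leaves implicit in the notation $x_{L^c,L_1}$.
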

\begin{proof}
From Theorem \ref{thm:pushf} we have \begin{align}\label{eq:one} i^*q_{I*}(\eta_L)&=\sum_{L_1\subset L^c}\frac{a_{L,L_1}\prod_{\al<0}v^{L_1}(x_{\al})}{x_{I,L_1}}f_{v^{L_1}}, 
\end{align}
\begin{align}\label{eq:two}
    i^{*}{q_{L^{c}}}_{*}(1)=\sum_{L_1\subset L^c}\frac{\prod_{\al<0}v^{L_1}(x_\al)}{x_{L^c, L_1}}f_{v^{L_1}}.
\end{align}
By definition 
\[x_{I,L_{1}}=\prod_{j\in I}v_{j}^{L_{1}}(x_{-\al_{i_{j}}}), \quad x_{L^{c},L_{1}}=\prod_{j\in L^{c}}v_{j}^{L_{1}}(x_{-\al_{i_{j}}}), \quad a_{L,L_1}=\prod_{j\in L}v^{L_1}_{j-1}(x_{-\al_{i_j}}). 
\]
Since $L\cap L_{1}=\emptyset$, so for any $j\in L$, $v^{L_1}_j=v^{L_1}_{j-1}$, and we have
 \begin{align*}
    x_{I,L_{1}}&=\prod_{j\in L^{c}}v_{j}^{L_{1}}(x_{-\al_{i_{j}}})\prod_{j\in L}v_{j}^{L_{1}}(x_{-\al_{i_{j}}})\\ &=\prod_{j\in L^{c}}v_{j}^{L_{1}}(x_{-\al_{i_{j}}})\prod_{j\in L}v_{j-1}^{L_{1}}(x_{-\al_{i_{j}}})\\ &=x_{L^c, L_1}a_{L,L_{1}} .\end{align*} 
Therefore, $i^*q_{I*}(\eta_L)=i^*q_{L^c*}(1)$. By \cite[Theorem 8.2]{CZZ3}, we know $i^*$ is injective. So $q_{I*}(\eta_L)=i^*q_{L^c*}(1)$. 
\end{proof}

By using this result, we can derive the Chevalley formula for equivariant oriented cohomology. For each $w\in W$, we fix a reduced sequence $I_w$, then the Bott-Samelson class $\zeta_{I_w}$ is defined to be the push-forward class along the map $q_{I_w}:\hX_{I_w}\to G/B$, i.e., $\zeta_{I_w}=q_{I_w*}(1)$. It is proved in \cite[Proposition 8.1]{CZZ3} that $\{\zeta_{I_w}|w\in W\}$ is a basis of $h_T(G/B)$. Denote the characteristic maps from $h_T(\pt)$ to $G/B$ and to $\hX_{I_w}$ by $\bfc'$ and $\bfc_{I_w}$, respectively. By definition, $\bfc_{I_w}=q_{I_w}^*\bfc'$.
\begin{cor}[Chevalley Formula]
 For any $u\in h_{T}(pt)$, we have \[\bfc'(u)\cdot \zeta_{w}=\sum_{L\subset[\ell(w)]}\theta_{I,L}(u)\zeta_{L^c},\]
where $\zeta_{L^c}=q_{L^c*}(1)$ and $\theta_{I,L}(u)$ was defined in Lemma \ref{lem:ind}. 
\begin{proof}
We have \[q_{I_w*}(\bfc_{I_w}(u))=q_{I_w*}(\bfc_{I_w}(u)\cdot 1)=q_{I_w*}(q_{I_w}^{*}(\bfc'(u))\cdot 1)=\bfc'(u)\zeta_{I_w},\]
where the last identity follows from the projection formula. 
Then Lemma \ref{lem:ind} and Corollary \ref{cor:pushBott} imply that 
\begin{align*}q_{I_w*}(\bfc_{I_w}(u)) & =\sum_{L\subset [\ell(w)]}\theta_{I_w,L}(u)q_{I_w*}(\eta_{L})\\
 & = \sum_{L\subset[\ell(w)]}\theta_{I_w,L}(u)q_{L^c*}(1)\\
&= \sum_{L\subset[\ell(w)]}\theta_{I_w,L}(u)\zeta_{L^c}.
\end{align*}
The conclusion then follows.
\end{proof}
\end{cor}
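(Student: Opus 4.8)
The plan is to obtain the identity by computing the push-forward $q_{I_w*}(\bfc_{I_w}(u))$ in two separate ways and then comparing the results. The glue between the two computations is the compatibility of characteristic maps, $\bfc_{I_w}=q_{I_w}^*\bfc'$, which holds because both maps send $x_\la$ to the first Chern class of the associated line bundle $\calL_\la$ and these bundles correspond under pull-back along $q_{I_w}$.

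First I would evaluate the left-hand side. Using $\bfc_{I_w}(u)=q_{I_w}^*(\bfc'(u))$ together with the projection formula for the proper map $q_{I_w}$, one gets
\[
q_{I_w*}(\bfc_{I_w}(u))=q_{I_w*}\bigl(q_{I_w}^*(\bfc'(u))\cdot 1\bigr)=\bfc'(u)\cdot q_{I_w*}(1)=\bfc'(u)\cdot \zeta_w,
\]
so this push-forward already reproduces the quantity on the left of the claimed formula.

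Next I would evaluate the same push-forward a second way, now expanding $\bfc_{I_w}(u)$ in the $\eta_L$-basis. By Lemma \ref{lem:ind} we have $\bfc_{I_w}(u)=\sum_{L\subset[\ell(w)]}\theta_{I_w,L}(u)\,\eta_L$, with each coefficient $\theta_{I_w,L}(u)$ lying in $S=h_T(\pt)$. Since $q_{I_w*}$ is $S$-linear, I can pull these scalar coefficients outside the push-forward and apply it term by term; invoking Corollary \ref{cor:pushBott}, which gives $q_{I_w*}(\eta_L)=q_{L^c*}(1)=\zeta_{L^c}$, this yields $q_{I_w*}(\bfc_{I_w}(u))=\sum_{L\subset[\ell(w)]}\theta_{I_w,L}(u)\,\zeta_{L^c}$. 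Equating the two expressions for $q_{I_w*}(\bfc_{I_w}(u))$ finishes the proof.

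The entire content of the statement is carried by its two inputs, the basis expansion of Lemma \ref{lem:ind} and the push-forward identity of Corollary \ref{cor:pushBott}; once these are granted, the combination step is purely formal. Consequently there is no real obstacle beyond assembling those ingredients, and the only point meriting a moment's care is verifying that each coefficient $\theta_{I_w,L}(u)$ genuinely lies in $S$ so that it may be extracted from the push-forward as a scalar. This is immediate, since by definition $\theta_{I_w,L}$ is a composition of Demazure operators $\Delta_{-\alpha_{i_j}}$ and Weyl reflections $s_{i_j}$, each of which maps $S$ to itself.
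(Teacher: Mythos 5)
Your proposal is correct and follows essentially the same route as the paper's own proof: compute $q_{I_w*}(\bfc_{I_w}(u))$ once via $\bfc_{I_w}=q_{I_w}^*\bfc'$ and the projection formula to get $\bfc'(u)\cdot\zeta_w$, and once via the expansion of Lemma \ref{lem:ind} together with Corollary \ref{cor:pushBott} to get $\sum_{L\subset[\ell(w)]}\theta_{I_w,L}(u)\zeta_{L^c}$, then equate. Your added remark that the coefficients $\theta_{I_w,L}(u)$ lie in $S$ (so that $S$-linearity of the push-forward applies) is a small but legitimate point of care that the paper leaves implicit.
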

\newcommand{\arxiv}[1]
{\texttt{\href{http://arxiv.org/abs/#1}{arXiv:#1}}}

\bibliographystyle{plain}

\end{document}